\documentclass[11pt]{article}
\usepackage{etex}
\usepackage[all]{xy}
\usepackage{cite}
\usepackage[utf8]{inputenc}
\usepackage{url}
\usepackage{todonotes}
\usepackage{amsfonts}
\usepackage{amsthm}
\usepackage{enumerate}
\usepackage{graphicx}
\usepackage{mathrsfs}
\usepackage{bm}
\usepackage{cite}
\usepackage[colorlinks,linkcolor=blue,citecolor=blue]{hyperref}
\usepackage{amssymb,amsmath} % font used for R in Real numbers
\usepackage{makecell}
\usepackage{tikz}
\usepackage{pgf}
\usepackage{tikz}
\usetikzlibrary{patterns}
\usepackage{pgffor}
\usepackage{pgfcalendar}
\usepackage{pgfpages}
\usepackage{shuffle,yfonts}
\usepackage{mathtools}
\DeclareFontFamily{U}{shuffle}{}
\DeclareFontShape{U}{shuffle}{m}{n}{ <-8>shuffle7 <8->shuffle10}{}

\newcommand{\nc}{\newcommand}
\nc{\AMZV}{\mathsf {AMZV}}
\nc{\ud}{\mathrm{d}}
\nc{\ES}{\mathsf {ES}}
\nc{\MZV}{\mathsf {MZV}}
\nc{\MtV}{\mathsf {MtV}}
\nc{\MTV}{\mathsf {MTV}}
\nc{\MSV}{\mathsf {MSV}}
\nc{\MMV}{\mathsf {MMV}}
\nc{\MMVo}{\mathsf {MMVo}}
\nc{\MMVe}{\mathsf {MMVe}}
\nc{\AMMV}{\mathsf {AMMV}}
\nc{\AMTV}{\mathsf {AMTV}}
\nc{\AMtV}{\mathsf {AMtV}}
\nc{\AMSV}{\mathsf {AMSV}}
\nc{\CMZV}{\mathsf {CMZV}}
\nc{\sha}{\shuffle}
\nc{\cst}{\rotatebox[origin=c]{180}{$\sha$}}
\nc{\cstt}{\rotatebox[origin=c]{180}{$\scriptstyle \sha$}}
\nc{\de}{\delta}
\nc{\DD}{{\mathbb D}}
\nc{\anbb}[1]{\left\langle#1\right\rangle}
\nc{\bibb}[1]{\left\{#1\right\}}
\nc{\mibb}[1]{\left[#1\right]}
\nc{\smbb}[1]{\left(#1\right)}
\nc{\doubb}[1]{\llbracket#1\rrbracket}
\nc{\dm}[1]{\left|#1\right|}

\nc{\Gbinom}[2]{\genfrac{(}{)}{0mm}{0}{#1}{#2}}
\nc{\gbinom}[2]{\genfrac{(}{)}{0mm}{1}{#1}{#2}}
\nc{\Rbinom}[2]{\genfrac{\langle}{\rangle}{0mm}{0}{#1}{#2}}
\nc{\rbinom}[2]{\genfrac{\langle}{\rangle}{0mm}{1}{#1}{#2}}
\nc{\Qbinom}[2]{\genfrac{[}{]}{0mm}{0}{#1}{#2}_q}
\nc{\qbinom}[2]{\genfrac{[}{]}{0mm}{1}{#1}{#2}_q}

\nc{\binq}[2]{\genfrac{[}{]}{0mm}{0}{#1}{#2}}
\nc{\tbnq}[2]{\genfrac{[}{]}{0mm}{1}{#1}{#2}}
\nc{\cinq}[2]{\genfrac{\{}{\}}{0mm}{0}{#1}{#2}}
\nc{\tcnq}[2]{\genfrac{\{}{\}}{0mm}{1}{#1}{#2}}

\nc{\mfrac}[2]{\genfrac{}{}{0pt}{}{#1}{#2}}
\nc{\tf}{\tfrac}
\nc{\db}{{\mathbb D}}
\nc{\pari}{{\rm par}}
\nc{\dk}{{\mathbb K}}
\nc{\ola}{\overleftarrow}
\nc{\ora}{\overrightarrow}
\nc{\lra}{\longrightarrow}
\nc{\Lra}{\Longrightarrow}
\nc\Res{{\rm Res}}
\nc\setX{{\mathsf{X}}}
\nc\fA{{\mathfrak{A}}}
\nc\evaM{{\texttt{M}}}
\nc\evaML{{\text{\em{\texttt{M}}}}}
\nc\z{{\texttt{z}}}
\nc\emz{\emph{\texttt{z}}}
\nc\tx{{\texttt{x}}}
\nc\txp{{\tx_1}} % textstyle x positive 1
\nc\txn{{\tx_{-1}}} % textstyle x negative 1
\nc\neo{{1}}
\nc{\yi}{{1}}
\nc\one{{-1}}
\nc\gD{{\Delta}}
\nc\eps{{\varepsilon}}
\nc{\bfMB}{{\bf MB}}
\nc{\bftB}{{\bf tB}}
\nc{\bfTB}{{\bf TB}}
\nc{\bfSB}{{\bf SB}}
\nc{\bfB}{{\bf B}}
\nc{\bfp}{{\bf p}}
\nc{\bfq}{{\bf q}}
\nc{\bfr}{{\bf r}}
\nc{\bfu}{{\bf u}}
\nc{\bfv}{{\bf v}}
\nc{\bfa}{{\bf a}}
\nc{\bfw}{{\bf w}}
\nc{\bfy}{{\bf y}}
\nc{\T}{\ddot{t}}
\nc{\bfe}{{\boldsymbol{\sl{e}}}}
\nc{\bfi}{{\boldsymbol{\sl{i}}}}
\nc{\bfj}{{\boldsymbol{\sl{j}}}}
\nc{\bfk}{{\boldsymbol{\sl{k}}}}
\nc{\bfl}{{\boldsymbol{\sl{l}}}}
\nc{\bfm}{{\boldsymbol{\sl{m}}}}
\nc{\bfn}{{\boldsymbol{\sl{n}}}}
\nc{\bfs}{{\boldsymbol{\sl{s}}}}
\nc{\bft}{{\boldsymbol{\sl{t}}}}
\nc{\bfx}{{\boldsymbol{\sl{x}}}}
\nc{\bfz}{{\boldsymbol{\sl{z}}}}
\nc\bfgs{{\boldsymbol \gs}}
\nc\bfgl{{\boldsymbol \lambda}}
\nc\bfsi{{\boldsymbol \gs}}
\nc\bfet{{\boldsymbol \eta}}
\nc\bfeta{{\boldsymbol \eta}}
\nc\bfeps{{\boldsymbol \eps}}
\nc\mmu{{\boldsymbol \mu}}
\nc\bfone{{\bf 1}}
\nc{\myone}{{1}}

 \nc{\calA}{{\mathcal A}}
 \nc{\calB}{{\mathcal B}}
 \nc{\calC}{{\mathcal C}}
 \nc{\calD}{{\mathcal D}}
 \nc{\calE}{{\mathcal E}}
 \nc{\calF}{{\mathcal F}}
 \nc{\calG}{{\mathcal G}}
 \nc{\calH}{{\mathcal H}}
 \nc{\calI}{{\mathcal I}}
 \nc{\calJ}{{\mathcal J}}
 \nc{\calK}{{\mathcal K}}
 \nc{\calL}{{\mathcal L}}
 \nc{\calM}{{\mathcal M}}
 \nc{\calN}{{\mathcal N}}
 \nc{\calO}{{\mathcal O}}
 \nc{\calP}{{\mathcal P}}
 \nc{\calQ}{{\mathcal Q}}
 \nc{\calR}{{\mathcal R}}
 \nc{\calS}{{\mathcal S}}
 \nc{\calT}{{\mathcal T}}
 \nc{\calU}{{\mathcal U}}
 \nc{\calV}{{\mathcal V}}
 \nc{\calW}{{\mathcal W}}
 \nc{\calX}{{\mathcal X}}
 \nc{\calY}{{\mathcal Y}}
 \nc{\calZ}{{\mathcal Z}}
  \nc{\cala}{{\mathcal a}}
 \nc{\calb}{{\mathcal b}}
 \nc{\calc}{{\mathcal c}}
 \nc{\cald}{{\mathcal d}}
 \nc{\cale}{{\mathcal e}}
 \nc{\calf}{{\mathcal f}}
 \nc{\calg}{{\mathcal g}}
 \nc{\calh}{{\mathcal h}}
 \nc{\cali}{{\mathcal i}}
 \nc{\calj}{{\mathcal j}}
 \nc{\calk}{{\mathcal k}}
 \nc{\call}{{\mathcal l}}
 \nc{\calm}{{\mathcal m}}
 \nc{\caln}{{\mathcal n}}
 \nc{\calo}{{\mathcal o}}
 \nc{\calp}{{\mathsf p}}
 \nc{\calq}{{\mathcal q}}
 \nc{\calr}{{\mathcal r}}
 \nc{\cals}{{\mathcal s}}
 \nc{\calt}{{\mathcal t}}
 \nc{\calu}{{\mathcal u}}
 \nc{\calv}{{\mathcal v}}
 \nc{\calw}{{\mathcal w}}
 \nc{\calx}{{\mathcal x}}
 \nc{\caly}{{\mathcal y}}
 \nc{\calz}{{\mathcal z}}
 \nc{\ot}{{\otimes}}
\usetikzlibrary{arrows,shapes,chains}
 \allowdisplaybreaks

\catcode`!=11
\let\!int\int \def\int{\displaystyle\!int}
\let\!lim\lim \def\lim{\displaystyle\!lim}
\let\!sum\sum \def\sum{\displaystyle\!sum}
\let\!sup\sup \def\sup{\displaystyle\!sup}
\let\!inf\inf \def\inf{\displaystyle\!inf}
\let\!cap\cap \def\cap{\displaystyle\!cap}
\let\!max\max \def\max{\displaystyle\!max}
\let\!min\min \def\min{\displaystyle\!min}
\let\!frac\frac \def\frac{\displaystyle\!frac}
\catcode`!=12

\nc{\gam}{{\gamma}}
\nc{\gG}{{\Gamma}}
\nc{\om}{{\omega}}
\nc{\vep}{{\varepsilon}}
\nc{\ga}{{\alpha}}
\nc{\gl}{{\lambda}}
\nc{\gb}{{\beta}}
\nc{\gd}{{\delta}}
\nc{\gf}{{\varphi}}
\nc{\gs}{{\sigma}}
\nc{\gk}{{\kappa}}
\nc{\gS}{\Sigma}
\let\oldsection\section
\renewcommand\section{\setcounter{equation}{0}\oldsection}

\allowdisplaybreaks

\DeclareMathOperator{\Li}{Li}

\DeclareMathOperator{\ti}{ti}

\nc\UU{\mbox{\bfseries U}}
\nc\FF{\mbox{\bfseries \itshape F}}
\nc\h{\mbox{\bfseries \itshape h}}\nc\dd{\mbox{d}}
\nc\g{\mbox{\bfseries \itshape g}}
\nc\xx{\mbox{\bfseries \itshape x}}

\def\N{\mathbb{N}}
\def\Z{\mathbb{Z}}

\def\xx{\left(\frac{1-x}{1+x} \right)}

\nc\divg{{\text{div}}}
\theoremstyle{plain}
\newtheorem{thm}{Theorem}[section]
\newtheorem{lem}[thm]{Lemma}

\newtheorem{cor}[thm]{Corollary}
\newtheorem{con}[thm]{Conjecture}

\theoremstyle{definition}

\newtheorem{re}[thm]{Remark}
\newtheorem{exa}[thm]{Example}
\newtheorem{qu}{Question}[section]

\nc{\cicc}[1]{{}_{{}^{ \bigcirc\hskip-1.2ex{#1}\hskip.3ex{}}}}
\nc{\cic}[1]{{}^{\bigcirc\hskip-1.15ex{\raisebox{-0.015cm}{\text{$\scriptscriptstyle #1$}}}\hskip.25ex{}}}
\nc{\ccic}[1]{{}^{\bigcirc\hskip-1.5ex{\raisebox{-0.015cm}{\text{$\scriptscriptstyle #1$}}}\hskip.25ex{}}}
\nc{\ncic}[1]{ {\bigcirc\hskip-1.6ex{\raisebox{-0.0cm}{\text{$\scriptstyle #1$}}}\hskip.25ex{}}}
\nc{\nncic}[1]{ {\bigcirc\hskip-2ex{\raisebox{-0.0cm}{\text{$\scriptstyle #1$}}}\hskip.25ex{}}}
\nc{\cci}[1]{{}_{{}^{ {\textstyle \bigcirc}\hskip-2.05ex{#1}\hskip-.35ex{}}}}
\nc{\ccicc}[1]{{}_{{}^{ {\textstyle \bigcirc}\hskip-1.55ex{#1}\hskip-0.1ex{}}}}
\nc{\x}{\rm{x}}
\nc{\tworow}[2]{\left(#1 \atop #2\right)}

\nc{\fl}{{\mathfrak l}}
\nc{\fm}{{\mathfrak m}}

\begin{document}
%%%%%%%%%%%%%%%%%%%% title %%%%%%%%%%%%%%%%%%%%%%%%%%%%%%%%%%%%%%%%%%%%%%%%
\title{\bf Symmetry Results for Cyclotomic Multiple Hurwitz Zeta Values via Contour Integrals}
\author{
{Ce Xu\thanks{Email: cexu2020@ahnu.edu.cn}}\\[1mm]
\small School of Mathematics and Statistics, Anhui Normal University,\\ \small Wuhu 241002, P.R. China
}
\date{}
\maketitle

\noindent{\bf Abstract.} This paper provides a systematic study of symmetry properties for cyclotomic multiple Hurwitz zeta values with multiple variables and parameters by applying the methods of contour integration and the residue theorem. The main contributions are the derivation of explicit symmetry formulas for cyclotomic multiple (Hurwitz) zeta values, which are obtained directly through analytic residue calculations, without reliance on algebraic regularization. As a concrete application, we deduce analogous symmetry theorems for cyclotomic multiple zeta values and cyclotomic multiple $t$-values. The results extend and complement recent symmetry investigations by Charlton and Hoffman, offering completely explicit and regularization-free formulas in the convergent setting. Moreover, the results of this paper can be used to prove the symmetry conjecture for cyclotomic multiple Hurwitz zeta values with multiple variables and a single parameter. Furthermore, several illustrative corollaries and examples are included, and an open problem concerning possible extensions to other variants of multiple zeta values is posed at the conclusion.

\medskip

\noindent{\bf Keywords}: Cyclotomic multiple (Hurwitz) zeta values; Cyclotomic multiple $t$-values; Symmetry theorem; Contour integration; Residue theorem.
\medskip

\noindent{\bf AMS Subject Classifications (2020):} 11M32, 11M99.

%\setcounter{section}{-1}
%\tableofcontents
\section{Introduction}

In a recent paper \cite{CharltonHoffman-MathZ2025}, Charlton and Hoffman investigated the symmetry theorem for regularized cyclotomic multiple $t$-values (\cite[Thms. 1.1 and 2.21]{CharltonHoffman-MathZ2025}) using stuffle regularization and generating functions of truncated cyclotomic multiple $t$-values. The motivation of this paper is to extend the symmetry theorem obtained by Charlton and Hoffman for cyclotomic multiple $t$-values to the more general setting of cyclotomic multiple Hurwitz zeta values, providing explicit formulas (see Theorems \ref{mainthm-CMZVs} and \ref{mainthm-CMHZVs}) without employing any algebraic tools-instead relying solely on analytic and purely analytical theories and methods. That is to say, we abandon the regularization process and consider only the convergent cases.

To facilitate the presentation of our main results and the methods used, we first introduce some specific notations.
Let $i$ and $j$ be integers greater than or equal to 0, with $i\leq j$. For any complex numbers $z_i, z_{i+1},\ldots, z_j$, define the following sequences:
\begin{align*}
\overrightarrow{\bfz}_{i,j}:=(z_i,z_{i+1},\ldots,z_j)\quad\text{and}\quad \overleftarrow{\bfz}_{i,j}:=(z_j,z_{j-1},\ldots,z_i),
\end{align*}
where $\overrightarrow{\bfz}_{i,j}=\overleftarrow{\bfz}_{i,j}:=\emptyset$ if $i>j$. If a superscript $-1$ is added, it is conventionally represented as:
\begin{align*}
\overrightarrow{\bfz}^{-1}_{i,j}:=(1/z_i,1/z_{i+1},\ldots,1/z_j)\quad\text{and}\quad \overleftarrow{\bfz}^{-1}_{i,j}:=(1/z_j,1/z_{j-1},\ldots,1/z_i)\quad (\forall\ z_l\in \mathbb{C}\setminus\{0\}).
\end{align*}
This paper also agrees that $z$ and $\bfz$ above can be replaced with other letters for similar definitions, such as $\overrightarrow{\bfy}_{i,j}=(y_i,y_{i+1},\ldots,y_j)$ and $\overleftarrow{\bfy}_{i,j}:=(y_j,y_{j-1},\ldots,y_i)$. In particular, for the addition of two sequences, we define
\begin{align*}
\overrightarrow{(\bfy\pm \bfz)}_{i,j}:=(y_i\pm z_i,y_{i+1}\pm z_{i+1},\ldots,y_j\pm z_j)\quad\text{and}\quad \overleftarrow{(\bfy\pm\bfz)}_{i,j}:=(y_j\pm z_j,y_{j-1}\pm z_{j-1},\ldots,y_i\pm z_i).
\end{align*}
Furthermore, if all elements in a sequence are equal, for example, $\overrightarrow{\bfy}_{i,j}=\overleftarrow{\bfy}_{i,j}=(\{y\}_{j-i})$ (where $\{y\}_r$ denotes the sequence of $y$ repeated $r$ times), then we adopt the following notation (the order of $y$ and $\bfz$ can also be interchanged)
\begin{align*}
\overrightarrow{(y\pm\bfz)}_{i,j}:=(y\pm z_i,y\pm z_{i+1},\ldots,y\pm z_j)\quad\text{and}\quad \overleftarrow{(y\pm \bfz)}_{i,j}:=(y\pm z_j,y\pm z_{j-1},\ldots,y\pm z_i).
\end{align*}
Let $i,j\in\N_0:=\N\cup\{0\}$ with $i\leq j$. If $k_i,k_{i+1},\ldots,k_j\in\N$, we define
\begin{align}
|\overrightarrow{\bfk}_{i,j}|:=k_i+k_{i+1}+\cdots+k_j\quad\text{and}\quad |\emptyset|:=0.
\end{align}

Let $\overrightarrow{\bfa}_{1,r}:=(a_1,\ldots,a_r)$ and $a_j\in \mathbb{C}\setminus \N^-\ (\N^-:=\{-1,-2,-3,\ldots\})$. For $\overrightarrow{\bfk}_{1,r}:=(k_1,\ldots,k_r)\in \N^r$ and $\overrightarrow{\bfx}_{1,r}:=(x_1,\ldots,x_r)$, where each $x_j$ is a root of unity and $(k_r,x_r)\neq (1,1)$, we define the \emph{cyclotomic multiple Hurwitz zeta values} with $r$-variables (namely $x_1,\ldots,x_r$) and $r$-parameters (namely $a_1,\ldots,a_r$) by
\begin{align}\label{defn-CMHZV}
\Li_{\overrightarrow{\bfk}_{1,r}}\left(\overrightarrow{\bfx}_{1,r};\overrightarrow{(1+\bfa)}_{1,r}\right)&\equiv \Li_{k_1,\ldots,k_r}(x_1,\ldots,x_r;a_1+1,\ldots,a_r+1)\nonumber\\
&:=\sum_{0<n_1<\cdots<n_r} \frac{x_1^{n_1}\cdots x_r^{n_r}}{(n_1+a_1)^{k_1}\cdots (n_r+a_r)^{k_r}},
\end{align}
where $r$ and $|\overrightarrow{\bfk}_{1,r}|=k_1+\cdots+k_r$ are called the \emph{depth} and \emph{weight}, respectively.
In particular, when all $x_j=1$, \eqref{defn-CMHZV} is referred to as the \emph{multiple Hurwitz zeta value} (also see \cite{AkiyamaIshikawa}). This can be viewed as a Hurwitz-type generalization of the classical \emph{multiple zeta values}, which are defined as follows (see \cite{H1992,DZ1994} ):
\begin{align}\label{defn-MZV}
\zeta\left(\overrightarrow{\bfk}_{1,r}\right)\equiv \zeta(k_1,\ldots,k_r):=\sum_{0<n_1<\cdots<n_r} \frac{1}{n_1^{k_1}\cdots n_r^{k_r}},
\end{align}
where $k_1,\ldots,k_r$ are positive integers and $k_r\geq 2$ (i.e. admissible).
If all $a_j=0$ then \eqref{defn-CMHZV} is called the \emph{cyclotomic multiple zeta value}, denoted by the symbol (see \cite{YuanZh2014a,Zhao2010})
\begin{align}\label{defn-CMZV}
\Li_{\overrightarrow{\bfk}_{1,r}}\left(\overrightarrow{\bfx}_{1,r}\right)\equiv \Li_{\overrightarrow{\bfk}_{1,r}}\left(\overrightarrow{\bfx}_{1,r};\{1\}_r\right)=\sum_{0<n_1<\cdots<n_r} \frac{x_1^{n_1}\cdots x_r^{n_r}}{n_1^{k_1}\cdots n_r^{k_r}}.
\end{align}
Furthermore, if all \(x_j \in \{\pm 1\}\) in \eqref{defn-CMZV} and at least one \(x_j = -1\), then it is referred to as an \emph{alternating multiple zeta value} (\cite{BBB1997}). It is worth emphasizing that many special alternating multiple zeta values can essentially be expressed as rational linear combinations of multiple zeta values. An alternating multiple zeta value that admits such a representation is referred to as an \emph{unramified} alternating multiple zeta value. For instance, Zhao proved in \cite{Zhao2-2010} the following beautiful unramified family of alternating multiple zeta values, which was initially discovered numerically in \cite{BBB1997}:
\[\zeta(\{1,{\bar 2}\}_l)=8^{-l}\zeta(\{3\}_l)\quad (l \in\N_0)\]
is one such unramified alternating multiple zeta value.
In fact, cyclotomic multiple zeta values are a special case of multiple polylogarithm function with $r$-variables. The \emph{multiple polylogarithm function with $r$-variables} refer to the case where all $x_j$ in \eqref{defn-CMZV} are treated as continuous variables and satisfy the condition $|x_j\cdots x_r|\leq 1$ with $(k_r,x_r)\neq (1,1)$. It can be analytically continued to a multi-valued meromorphic function on $\mathbb{C}^r$ (see \cite{Zhao2007d}). In particular, if $a_1=a_2=\cdots=a_r=a$ in \eqref{defn-CMHZV}, we adopt the notation (We refer to these as cyclotomic multiple Hurwitz zeta values with $r$-variables and single parameter):
\begin{align}\label{defn-CMHZV-single}
\Li_{\overrightarrow{\bfk}_{1,r}}\left(\overrightarrow{\bfx}_{1,r};a+1\right)&\equiv \Li_{k_1,\ldots,k_r}(x_1,\ldots,x_r;\{a+1\}_r)\nonumber\\&=\sum_{0<n_1<\cdots<n_r} \frac{x_1^{n_1}\cdots x_r^{n_r}}{(n_1+a)^{k_1}\cdots (n_r+a)^{k_r}}.
\end{align}
Recently, Rui \cite{Rui2026} proposed the following symmetry conjecture concerning cyclotomic multiple Hurwitz zeta values with $r$-variables and single parameter:
\begin{con}(\cite[Conjecture 4.6]{Rui2026})\label{conjsymetrycmHZv}
Let $x_1,\ldots,x_r$ be roots of unity and $a\in \mathbb{C}\setminus \Z$. For $\bfk=(k_1,\ldots,k_r)\in \N^r$ and $(k_1,x_1),\ (k_r,x_r)\neq (1,1)$, we have
\begin{align*}
\Li_{k_1,\ldots,k_r}(x_1,\ldots,x_r;a)\equiv  (-1)^{k_1+\cdots+k_r-1} (x_1\cdots x_r) \Li_{k_r,\ldots,k_1}(x^{-1}_r,\ldots,x_1^{-1};1-a)\quad (\text{mod products}),
\end{align*}
where ``mod products" means discarding all product terms of cyclotomic multiple Hurwitz zeta values with depth $<r$.
\end{con}
Clearly, when $a=1/2$ in the above conjecture, a symmetry result concerning cyclotomic multiple $t$-values can be obtained. When all $a_j=-1/2$ in \eqref{defn-CMHZV}, it is referred to as a \emph{cyclotomic multiple $t$-value}, denoted by the symbol $\ti_{\overrightarrow{\bfk}_{1,r}}(\overrightarrow{\bfx}_{1,r})$, that is
\begin{align}\label{defn-CMtV}
\ti_{\overrightarrow{\bfk}_{1,r}}\left(\overrightarrow{\bfx}_{1,r}\right)&\equiv \ti_{k_1,\ldots,k_r}(x_1,\ldots,x_r)=\sum_{0<n_1<\cdots<n_r} \frac{x_1^{n_1}\cdots x_r^{n_r}}{(n_1-1/2)^{k_1}\cdots (n_r-1/2)^{k_r}}.
\end{align}
This paper employs the methods of contour integration and residue computation to prove the Conjecture \ref{conjsymetrycmHZv} (see Corollary \ref{main-thm-corthree} and Remark \ref{re-conj}), and further presents a more generalized form along with explicit formulas.

The concept of multiple zeta values was independently introduced in the early 1990s by Hoffman \cite{H1992} and Zagier \cite{DZ1994}. Due to their profound connections with diverse fields such as knot theory, algebraic geometry, and theoretical physics, the study of multiple zeta values has attracted significant attention from mathematicians and physicists. Over more than three decades of development, this area has generated a wealth of research. For a comprehensive overview of results prior to 2016, readers may refer to Zhao's monograph \cite{Z2016}. The classical multiple $t$-values $t(k_1,\ldots,k_r)\equiv \ti_{k_1,\ldots,k_r}(\{1\}_r)$ were formally introduced by Hoffman in 2018 (\cite{H2019}). However, even before Hoffman's formal definition, multiple $t$-values had already been studied by several researchers. For example, Zhao \cite{Zhao2015} investigated certain sum formula properties of multiple $t$-values in 2015, and in his recent 2024 paper \cite{Z2024}, he systematically examined sum formulas for multiple zeta values of level two. Certainly, there exist many other variants of multiple zeta values, such as Kaneko-Tsumura multiple
$T$-values, Mordell-Tornheim zeta functions, motivic multiple zeta/$t-$/$T-$ values, and so on. For further details, refer to the literature \cite{Charlton-MathAnn2025,DSS2025,KanekoTs2019,KMT2023,Li2024,LiLiu2021,Murakami2021} and related references. Obviously, if setting $a_j={-j}/{2}$ and all $x_j=1$ in \eqref{defn-CMHZV} then yields the Kaneko-Tsumura's multiple $T$-values (\cite{KanekoTs2019}):
\begin{align*}
\Li_{k_1,\ldots,k_r}\left(\{1\}_r;1-\frac1{2},\ldots,1-\frac{r}{2}\right)=2^{k_1+\cdots+k_r-r} T(k_1,\ldots,k_r),
\end{align*}
where the \emph{multiple $T$-values} $T(k_1,\ldots,k_r)$ is defined by $(k_r>1)$
\begin{align*}
T(k_1,\ldots,k_r):=2^r\sum\limits_{0<n_1<\cdots<n_r} \frac{1}{(2n_1-1)^{k_1} (2n_2-2)^{k_2}\cdots(2n_r-r)^{k_r}}.
\end{align*}

In 1998, Flajolet and Salvy \cite{Flajolet-Salvy} developed a contour integral theory to systematically study \emph{generalized Euler sums} of the following general form:
\begin{align}
{S_{{p_1p_2\cdots p_k},q}} := \sum\limits_{n = 1}^\infty  {\frac{{H_n^{\left( {{p_1}} \right)}H_n^{\left( {{p_2}} \right)} \cdots H_n^{\left( {{p_k}} \right)}}}
{{{n^{q}}}}},
\end{align}
where $p_j\in \N$ and $q\geq 2$. Here $H_n^{(p)}$ stands the {\emph{generalized harmonic number}} of order $p$ defined by
\[H_n^{(p)}:=\sum_{k=1}^n \frac{1}{k^p}\quad (p\in\N).\]
According to the stuffle relations, it is straightforward to see that the generalized Euler sums mentioned above can be expressed as integer-coefficient linear combinations of multiple zeta values. The method they employed involves considering the following contour integral:
\begin{align*}
\oint_{(\infty)} r(s)\xi(s) ds:=\lim_{R\rightarrow \infty}\oint_{C_R} r(s)\xi(s) ds =0,
\end{align*}
where $C_R$ denotes a circular contour with radius $R$. Here $\xi(s)$ is referred to as a kernel function, defined as
\begin{align*}
\xi(s)=\frac{\pi\cot(\pi s)\psi^{(p_1-1)}(-s)\psi^{(p_2-1)}(-s)\cdots \psi^{(p_k-1)}(-s)}{(p_1-1)!(p_2-1)!\cdots(p_k-1)!}
\end{align*} and $r(s)$ is a basis function, defined as $r(s)=1/s^q\ (\forall p_j\in\N,\ q\in \N\setminus \{0\})$. $\psi(s)$ denotes the classical \emph{digamma function} defined by
\begin{align}\label{defn-classical-psi-funtion}
\psi(s)=-\gamma-\frac1{s}+\sum_{k=1}^\infty \left(\frac1{k}-\frac{1}{s+k}\right),
\end{align}
where $s\in\mathbb{C}\setminus \N_0^-$ and $\N_0^-:=\N^-\cup\{0\}=\{0,-1,-2,-3,\ldots\}$.

The main purpose of this paper is to systematically investigate the symmetry results for cyclotomic multiple Hurwitz zeta values with multiple variables and parameters by employing the method of contour integration and residue calculation developed by Flajolet and Salvy \cite{Flajolet-Salvy}, thereby deriving symmetry results for cyclotomic multiple $t$-values. In contrast, our primary approach to investigating the main theorems of this paper involves considering the following two contour integrals: for $q\in\N$ and roots of unity $x,x_1,\ldots,x_r$ with $(q,x)$ and $(k_r,x_r)\neq (1,1)$,
\begin{align}
\oint\limits_{\left( \infty  \right)} \frac{\Phi(s;x)\Li_{k_1,\ldots,k_r}(x_1,\ldots,x_r;s+1)}{s^q} ds=0\label{contour-integral-one}
\end{align}
and
\begin{align}
&\oint\limits_{\left( \infty  \right)} \frac{\Phi(s;x)\Li_{k_1,\ldots,k_r}(x_1,\ldots,x_r;1+a_1+s,\ldots,1+a_r+s)}{(s+a)^q} ds=0,\label{contour-integral-two}
\end{align}
where $\Phi(s;x)$ denotes the \emph{extended trigonometric function} defined through the \emph{generalized digamma function} $\phi(s;x)$ (see \cite{Rui-Xu2025})
\begin{align}\label{etrif}
\Phi(s;x):=\phi(s;x)-\phi\Big(-s;x^{-1}\Big)-\frac1{s},
\end{align}
with $\phi(s;x)$ defined as
\begin{align}\label{gdigf}
\phi(s;x):=\sum_{k=0}^\infty \frac{x^k}{k+s}\quad (s\notin\N_0^-).
\end{align}
Here, the convergence of \eqref{etrif} is guaranteed only when $x$ takes a root of unity, whereas in \eqref{gdigf}, $x$ may be any value satisfying $|x|\leq 1$ and $x\neq 1$.

Both contour integrals \eqref{contour-integral-one} and \eqref{contour-integral-two} are zero because, for a circular contour of radius \(R\to\infty\), the integrand decays sufficiently fast. Specifically, the kernel \(\Phi(s;x)\) grows at most like \(O(|s|^{-2})\) for roots of unity \(x\neq1\) (and remains bounded when \(x=1\) but then \(q\ge2\) ensures decay). The multiple polylogarithm factor is bounded in the relevant region, while the rational factors \(s^{-q}\) or \((s+a)^{-q}\) provide an extra decay of order \(|s|^{-q}\). Hence the whole integrand is \(O(|s|^{-2-q})\). With \(q\ge1\), the product of this decay and the arc length \(2\pi R\) tends to \(0\) as \(R\to\infty\); therefore the limiting integral is zero. The residue theorem then translates this into the vanishing sum of residues used in the paper.

The structure of this paper is as follows: In Section 2, we present the main theorems and corollaries, while the proofs of the primary results are provided in subsequent sections. Section 3 details the Laurent expansions of the integrands of the considered contour integrals at their poles, preparing for the residue calculations. In Sections 4 and 5, we present the proofs of the two main theorems, respectively, along with illustrative corollaries and examples.

\section{Main Theorems and Corollaries}
In this section, we present a symmetry formula for cyclotomic multiple zeta values and a parity formula for cyclotomic multiple Hurwitz zeta values, from which a parity formula for cyclotomic multiple $t$-values can be derived. By further removing the product terms in the expressions, we also provide concise symmetry results.

To better describe the main results of this paper, we adopt the following concise notations: ($k_0:=q$)
\begin{align}
&B_m\left(\overrightarrow{\bfk}_{1,r};\overrightarrow{\bfm}_{1,r}\right):=(-1)^m\prod_{l=1}^r \binom{m_l+k_l-1}{k_l-1},\\
&C_{q,m,i,j}\left(\overrightarrow{\bfk}_{1,r};\overrightarrow{\bfm}_{1,r}\right):=(-1)^{|\overrightarrow{\bfk}_{0,j-1}|+|\overrightarrow{\bfm}_{j+1,r}|}\binom{q+k_j-m-i-1}{q-1}\prod_{l=1,\atop l\neq j}^r \binom{m_l+k_l-1}{k_l-1},\\
&D_{q,m,i,j}\left(\overrightarrow{\bfk}_{1,r};\overrightarrow{\bfm}_{1,r}\right):=(-1)^{|\overrightarrow{\bfk}_{0,j-1}|+|\overrightarrow{\bfm}_{j+1,r}|}\binom{q+k_j-m-i-2}{q-1}\prod_{l=1,\atop l\neq j}^r \binom{m_l+k_l-1}{k_l-1},\\
&\widehat{\Li}_{j+1}(x;a):=(-1)^j\Li_{j+1}(x^{-1};1-a)-x^{-1}\Li_{j+1}(x;a)\quad (a\in \mathbb{C}\setminus \Z),\\
&\widehat{\ti}_{j+1}(x):=(-1)^j\ti_{j+1}(x^{-1})-x^{-1}\ti_{j+1}(x).
\end{align}
The two main theorems of this paper are as follows (The symmetry may not be immediately evident from the formulas below, but it will become clearly visible in the later corollaries after removing all lower-order product terms):

\begin{thm}\label{mainthm-CMZVs} Let $k_0=q$ and $x_0=x$. For $(k_0,k_1,\ldots,k_r)\in \N^{r+1}$ and $x_0,x_1,\ldots,x_r$ are roots of unity with $(k_r,x_r)\neq(1,1)$ and $(k_0,x_0)\neq (1,1)$, we have
\begin{align}\label{equ-mainthm-CMZVs}
&\Li_{\overrightarrow{\bfk}_{0,r}}\left(\overrightarrow{\bfx}_{0,r}\right)+\sum_{j=0}^r (-1)^{|\overrightarrow{\bfk}_{0,j}|}\Li_{\overrightarrow{\bfk}_{j+1,r}}\left(\overrightarrow{\bfx}_{j+1,r}\right)\Li_{\overleftarrow{\bfk}_{0,j}}\left(\overleftarrow{\bfx}^{-1}_{0,j}\right)
\nonumber\\
&-\sum_{j+m=q,\atop j,m\geq 0} \sum_{|\bfm_{1,r}|=m,\atop \forall\ m_l\geq 0} B_m\left(\overrightarrow{\bfk}_{1,r};\overrightarrow{\bfm}_{1,r}\right) \left((-1)^j\Li_j\left(\widetilde{\bfx^{-1}_{0,r}}\right)+\Li_j\left(\widetilde{\bfx_{0,r}}\right)\right)\Li_{\overrightarrow{(\bfk+\bfm)}_{1,r}}\left(\overrightarrow{\bfx}_{1,r}\right)\nonumber\\
&-\sum_{j=1}^r\sum_{0\leq i+m\leq k_j,\atop i,m\geq 0} \sum_{|\overrightarrow{\bfm}_{1,r}|-m_j=m,\atop \forall\ m_l\geq 0} C_{q,m,i,j}\left(\overrightarrow{\bfk}_{1,r};\overrightarrow{\bfm}_{1,r}\right)\left((-1)^i\Li_i\left(\widetilde{\bfx^{-1}_{0,r}}\right)+\Li_i\left(\widetilde{\bfx_{0,r}}\right)\right)\nonumber\\
&\qquad\qquad\times \Li_{\overrightarrow{(\bfk+\bfm)}_{j+1,r}}\left(\overrightarrow{\bfx}_{j+1,r}\right)
\Li_{\overleftarrow{(\bfk+\bfm)}_{1,j-1},q+k_j-i-m}\left(\overleftarrow{\bfx}_{0,j-1}^{-1}\right)\nonumber\\
&=0,
\end{align}
where $\widetilde{\bfx_{0,r}}:=x_0x_1\cdots x_r,\ \widetilde{\bfx^{-1}_{0,r}}:=(x_0x_1\cdots x_r)^{-1}$, and $\Li_0(x)+\Li_0(x^{-1}):=-1$.
\end{thm}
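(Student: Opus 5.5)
We use the first contour integral \eqref{contour-integral-one}, with kernel $\Phi(s;x)$ for $x=x_0$ and basis function $s^{-q}$, $q=k_0$. The function
\[
F(s):=\frac{\Phi(s;x)\,\Li_{k_1,\ldots,k_r}(x_1,\ldots,x_r;s+1)}{s^q}
\]
is meromorphic, because $\Li_{\overrightarrow{\bfk}_{1,r}}(\overrightarrow{\bfx}_{1,r};s+1)=\sum_{0<n_1<\cdots<n_r}\prod_l x_l^{n_l}(n_l+s)^{-k_l}$ has poles only at the negative integers $s=-n$. The kernel $\Phi(s;x)$ has simple poles at every integer $s=n$, with residue $x^{n}$ for $n\ge 0$ and residue $x^{n}$ (i.e.\ $x^{-1}$ raised to $|n|$) for $n<0$. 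Granting that the integral over large circles vanishes, as asserted in the introduction, the sum of all residues of $F$ is zero. The poles of $F$ split into three classes:
\begin{itemize}
\item[(a)] positive integers $s=n$, where only $\Phi$ is singular;
\item[(b)] $s=0$, a pole of order $q+1$;
\item[(c)] negative integers $s=-n$, where $\Phi$ and the multiple Hurwitz function are both singular.
\end{itemize}

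\emph{Class (a).} The residue at $s=n$ is $x^n n^{-q}\Li_{\overrightarrow{\bfk}_{1,r}}(\overrightarrow{\bfx}_{1,r};n+1)$. The inner sum is $\sum_{0<n_1<\cdots<n_r}\prod_l x_l^{n_l}(n_l+n)^{-k_l}$. Shifting $n_l\mapsto n_l-n$ turns it into $\sum_{n<n_1<\cdots<n_r}\prod_l x_l^{n_l-n}n_l^{-k_l}$. After multiplying by $x^n n^{-q}$ and summing over $n$, this is a depth-$(r+1)$ sum in which the variable attached to $n$ becomes $x_0(x_1\cdots x_r)^{-1}$. To land exactly on $\Li_{\overrightarrow{\bfk}_{0,r}}(\overrightarrow{\bfx}_{0,r})$ as stated, I would define the twisted function with the convention used in the paper (equivalently, replace $x_0$ by $x_0x_1\cdots x_r$ at the outset). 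The variable normalisation needs to be checked carefully against the $\widetilde{\bfx_{0,r}}$ appearing in the theorem.

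\emph{Class (b), $s=0$.} Expand each factor in Laurent series at $s=0$. For the kernel, $\Phi(s;x)=1/s+\sum_{j\ge1}\bigl((-1)^{j-1}\Li_j(x)-\Li_j(x^{-1})\bigr)s^{j-1}$, up to the sign conventions fixed in Section 3. For the multiple Hurwitz function, $\Li_{\overrightarrow{\bfk}_{1,r}}(\cdot;s+1)=\sum_{m}(-s)^m\sum_{|\bfm|=m}\prod_l\binom{m_l+k_l-1}{k_l-1}\Li_{\overrightarrow{(\bfk+\bfm)}_{1,r}}(\overrightarrow{\bfx}_{1,r})$, which produces $B_m$. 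Extracting the coefficient of $s^{q-1}$ in the product gives the $B_m$ double sum with $j+m=q$. The convention $\Li_0(x)+\Li_0(x^{-1})=-1$ absorbs the $1/s$ term.

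\emph{Class (c), $s=-n$.} Split the inner sum at the index $j$ with $n_j=n$. The indices $n_l<n$ (for $l<j$) give regular factors $(n_l-n)^{-k_l}$. The factor $(s+n)^{-k_j}$ is the pole, of order $k_j$, which becomes $k_j+1$ together with $\Phi$. The indices $n_l>n$ give regular factors expanded in powers of $(s+n)$. The contributions are:
\begin{itemize}
\item Terms where no $n_j$ equals $n$: a simple pole from $\Phi$ alone. Summing over $n$ and reversing the indices $n_l<n$ (via $n_l\mapsto n-n_l$) produces the products $\Li_{\overrightarrow{\bfk}_{j+1,r}}\Li_{\overleftarrow{\bfk}_{0,j}}(\overleftarrow{\bfx}^{-1}_{0,j})$ with sign $(-1)^{|\bfk_{0,j}|}$.
\item Terms where $n_j=n$: expand $\Phi(s;x)$ around $-n$ (its regular part is again the $\Li_i$-combination shifted by $x^{-n}$), expand $s^{-q}$ around $-n$ to obtain $\binom{q+k_j-m-i-1}{q-1}n^{-(q+k_j-i-m)}$, and expand the remaining factors. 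Collecting coefficients gives the $C_{q,m,i,j}$ sum, with the reversed depth-$j$ factor ending in the index $q+k_j-i-m$.
\end{itemize}

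The main obstacle is the bookkeeping in class (c): the sign factors $(-1)^{|\bfk_{0,j-1}|+|\bfm_{j+1,r}|}$ and the variable reversals $\overleftarrow{\bfx}^{-1}$ under $n_l\mapsto n-n_l$. I would first verify the whole computation for $r=1$ against known depth-two reflection formulas, and then proceed by tracking generating functions in $(s+n)$.
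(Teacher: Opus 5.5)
Your route is the paper's. You use the same integrand $F_q(s)$ and the same three classes of poles. Your splitting of the inner sum at the index with $n_j=n$ is exactly what the Laurent expansion \eqref{Laurent-expansion-mhpolyf} records; note that the factors with $l<j$ must also be Taylor-expanded in $s+n$, which is where $\overleftarrow{(\bfk+\bfm)}_{1,j-1}$ comes from. Your binomial coefficient from $s^{-q}$ is the one the paper uses.

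The concrete error is the residue of the kernel. Since $\Phi(s;x)=\sum_{k\in\Z}x^k/(s+k)$ (summed symmetrically), its residue at $s=n$ is $x^{-n}$ for every $n\in\Z$. Near $s=-n$ one has $\Phi(s;x)=x^{n}\bigl((s+n)^{-1}+\sum_{m\ge0}((-1)^m\Li_{m+1}(x)-\Li_{m+1}(x^{-1}))(s+n)^m\bigr)$ (Lemma \ref{lem-rui-xu-one}). You instead have $x^{n}$ at $s=n$ and a factor $x^{-n}$ at $s=-n$. With the correct values:
\begin{itemize}
\item class (a) sums to $\Li_{\overrightarrow{\bfk}_{0,r}}\bigl((xx_1\cdots x_r)^{-1},\overrightarrow{\bfx}_{1,r}\bigr)$;
\item class (c) carries the factor $(xx_1\cdots x_r)^{n}$;
\item the substitution that yields \eqref{equ-mainthm-CMZVs} is $x=\widetilde{\bfx^{-1}_{0,r}}$, not $x=x_0x_1\cdots x_r$.
\end{itemize}

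This is not a normalisation you can fix at the end. Your expansion at $s=0$ is the correct one for $\Phi(s;x)$, while your residues at $s=\pm n$ are those of $\Phi(s;x^{-1})$, and the two are incompatible.

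Already for $r=0$ your bookkeeping gives $\Li_q(x)+(-1)^q\Li_q(x^{-1})+(-1)^{q-1}\Li_q(x)-\Li_q(x^{-1})=0$. For odd $q$ this reads $2\bigl(\Li_q(x)-\Li_q(x^{-1})\bigr)=0$, which is false; for $q=1$, $x=i$ the left side is $i\pi$. The correct residues instead give $\Li_q(x^{-1})+(-1)^q\Li_q(x)+(-1)^{q-1}\Li_q(x)-\Li_q(x^{-1})=0$.

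In general, after substituting $x=x_0x_1\cdots x_r$, every odd-index bracket $(-1)^j\Li_j(\widetilde{\bfx^{-1}_{0,r}})+\Li_j(\widetilde{\bfx_{0,r}})$ in the $B_m$-sum and the $C_{q,m,i,j}$-sum would appear with the wrong sign. Once the residues are corrected, every piece is governed by the single substitution $x=\widetilde{\bfx^{-1}_{0,r}}$. Then $(xx_1\cdots x_r)^{n}=x_0^{-n}$ produces the reversed arguments $\overleftarrow{\bfx}^{-1}_{0,j}$, and the remaining computation is exactly the paper's.
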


\begin{thm}\label{mainthm-CMHZVs} Let $k_0:=q,\ a_0:=a$ and $x_0:=x$. For $(k_0,k_1,\ldots,k_r)\in \N^{r+1}$, $a_0,a_1,\ldots,a_r\in \mathbb{C}\setminus\Z$ and $x_0,x_1,\ldots,x_r$ are roots of unity with $(k_r,x_r)\neq(1,1)$ and $(k_0,x_0)\neq (1,1)$, we have
\begin{align}\label{equ-mainthm-CMHZVs}
&\widetilde{\bfx^{-1}_{0,r}}\Li_{\overrightarrow{\bfk}_{0,r}}\left(\overrightarrow{\bfx}_{0,r};\overrightarrow{\bfa}_{0,r}\right)+\sum_{j=0}^r (-1)^{|\overrightarrow{\bfk}_{0,j}|}\widetilde{\bfx_{j+1,r}^{-1}}\Li_{\overrightarrow{\bfk}_{j+1,r}}\left(\overrightarrow{\bfx}_{j+1,r};\overrightarrow{\bfa}_{j+1,r}\right)
\Li_{\overleftarrow{\bfk}_{0,j}}\left(\overleftarrow{\bfx}^{-1}_{0,j};\overleftarrow{(1-\bfa)}_{0,j}\right)\nonumber\\
&+\sum_{j+m=q-1,\atop j,m\geq 0}\sum_{|\overrightarrow{\bfm}_{1,r}|=m,\atop \forall\ m_l\geq 0} B_m\left(\overrightarrow{\bfk}_{1,r};\overrightarrow{\bfm}_{1,r}\right) \widehat{\Li}_{j+1}\left(\widetilde{\bfx_{0,r}};a\right)\Li_{\overrightarrow{(\bfk+\bfm)}_{1,r}}\left(\overrightarrow{\bfx}_{1,r};\overrightarrow{(1-a+\bfa)}_{1,r}\right)\nonumber\\
&+\sum_{j=1}^r \sum_{0\leq i+m\leq k_j-1,\atop i,m\geq 0}\sum_{|\overrightarrow{\bfm}_{1,r}|-m_j=m,\atop \forall\ m_l\geq 0}D_{q,m,i,j}\left(\overrightarrow{\bfk}_{1,r};\overrightarrow{\bfm}_{1,r}\right)\widehat{\Li}_{i+1}\left(\widetilde{\bfx_{0,r}};a_j\right)\nonumber\\
&\quad\times \Li_{\overrightarrow{(\bfk+\bfm)}_{j+1,r}}\left(\overrightarrow{\bfx}_{j+1,r};\overrightarrow{(1-a_j+\bfa)}_{j+1,r}\right)
\Li_{\overleftarrow{(\bfk+\bfm)}_{1,j-1},q+k_j-i-m-1}\left(\overleftarrow{\bfx}_{0,j-1}^{-1};\overleftarrow{(1+a_j-\bfa)}_{0,j-1}\right)\nonumber\\
&=0,
\end{align}
where $a_j-a_i\notin \N^-$ for $i< j\ (i,j\in \{0,1,2,\ldots,r\})$. Here $\widetilde{\bfx_{j,r}}:=x_jx_{j+1}\cdots x_r $ and $\widetilde{\bfx^{-1}_{j,r}}:=(x_jx_{j+1}\cdots x_r)^{-1}\ (0\leq j\leq r)$.
\end{thm}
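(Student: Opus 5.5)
We will evaluate the contour integral \eqref{contour-integral-two} with $q=k_0$, $x=x_0$, $a=a_0$:
\begin{align*}
\oint_{(\infty)} F(s)\,ds=0,\qquad F(s):=\frac{\Phi(s;x_0)\Li_{\overrightarrow{\bfk}_{1,r}}\left(\overrightarrow{\bfx}_{1,r};\overrightarrow{(1+\bfa+s)}_{1,r}\right)}{(s+a_0)^{q}}.
\end{align*}
By the residue theorem, the sum of all residues of $F$ is zero. We first check the vanishing of the integral on large circles avoiding the poles. For this we use that $\Phi(s;x_0)$ is bounded away from the integers, with decay when $x_0\ne 1$. We also need a uniform bound for the Hurwitz-type polylogarithm $\Li(\cdot;1+\bfa+s)$ on such circles; the hypothesis $(k_r,x_r)\ne(1,1)$ secures its convergence, and the case $(q,x_0)=(1,1)$ is excluded.

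Next we locate and classify the poles. From \eqref{etrif}, $\Phi(s;x_0)$ has simple poles at every integer $s=n$, with residue $x_0^{-n}$ for $n\ge 1$, $x_0^{n}$-type residues for $n\le -1$, and residue $0$ at $s=0$ after the correction $-1/s$. The factor $(s+a_0)^{-q}$ gives a pole of order $q$ at $s=-a_0$. The polylogarithm has poles at $s=-n_j-a_j$ of order $k_j$, coming from the $j$-th summation variable. The hypotheses $a_j\notin\Z$ and $a_j-a_i\notin\N^-$ for $i<j$ keep these three families disjoint, or at least make them collide only where this is harmless.

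Then we compute the residues family by family.
\begin{enumerate}[(i)]
\item At $s=n\in\Z$ we get $\pm x_0^{\mp n}(n+a_0)^{-q}\Li(\dots;1+\bfa+n)$. Summing over $n\ge 1$ and over $n\le -1$, and splitting the inner nested sum according to the position of $n$ relative to $n_1<\cdots<n_r$ after shifting, produces the depth-$(r+1)$ term $\widetilde{\bfx^{-1}_{0,r}}\Li_{\overrightarrow{\bfk}_{0,r}}(\overrightarrow{\bfx}_{0,r};\overrightarrow{\bfa}_{0,r})$. It also produces the products $\Li_{\overrightarrow{\bfk}_{j+1,r}}\cdot\Li_{\overleftarrow{\bfk}_{0,j}}(\overleftarrow{\bfx}^{-1};\overleftarrow{(1-\bfa)})$ with signs $(-1)^{|\overrightarrow{\bfk}_{0,j}|}$, the reversal arising from the substitution $n\mapsto -n$.
\item At $s=-a_0$ we need the Laurent expansions of $\Phi(s;x_0)$ and of $\Li(\dots;1+\bfa+s)$ around $s=-a_0$. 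For $\Phi$, differentiating \eqref{gdigf} gives coefficients $\widehat{\Li}_{j+1}(x_0;a_0)$-type values. For the polylogarithm, expanding $(n_l+a_l+s)^{-k_l}$ by the binomial series yields $B_m$ and $\Li_{\overrightarrow{(\bfk+\bfm)}}(\dots;\overrightarrow{(1-a_0+\bfa)})$. Extracting the coefficient of $(s+a_0)^{q-1}$ gives the $B_m$ sum. The variable appears as $\widetilde{\bfx_{0,r}}$ because shifting the nested sum absorbs the factors $x_l^{n}$.
\item At $s=-n_j-a_j$ we take a pole of order $k_j$. We expand the other factors, namely $\Phi$ near $-a_j$ modulo an integer shift, $(s+a_0)^{-q}$ giving $\binom{q+k_j-m-i-2}{q-1}$, and the remaining $(n_l+a_l+s)^{-k_l}$ giving the other binomials. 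Summing over $n_j$ and splitting the conditions $n_l<n_j$ and $n_l>n_j$ gives the $D_{q,m,i,j}$ terms. In these, $\Li_{\overrightarrow{(\bfk+\bfm)}_{j+1,r}}$ carries parameters $1-a_j+\bfa$, and a reversed factor of depth $j$ carries last index $q+k_j-i-m-1$ with parameters $1+a_j-\bfa$.
\end{enumerate}

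Theorem~\ref{mainthm-CMZVs} should follow by the same scheme, using \eqref{contour-integral-one}. There all $a_j$ coincide with $0$, so the poles merge into higher-order poles at the integers. These merged poles produce the $\Li_j(\widetilde{\bfx_{0,r}})$ coefficients, and $C$ appears in place of $D$.

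The main obstacle is step (iii) together with the bookkeeping in (i). We must track the Laurent expansion of $\Phi$ at the shifted points $-n_j-a_j$, which requires the functional relation $\Phi(s+1;x)=x^{-1}\Phi(s;x)$-type periodicity so that the $n_j$-dependence factors out as $x_0^{\pm n_j}$. Only then does the sum over $n_j$ recombine into a genuine multiple Hurwitz value with combined variable $\widetilde{\bfx_{0,r}}$. We also need to justify interchanging the residue sum with the nested series, which we would handle via absolute convergence for $k_l\ge 2$ and Abel summation for the unit-weight root-of-unity entries. We would first verify the whole identity in depth $r=1$ to fix the signs and index ranges before writing the general computation.
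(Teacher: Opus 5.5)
Your architecture is the paper's: your $F$ is its $G_q$, with the same three families of poles and the same expansions (Lemmas~\ref{lem-rui-xu-one} and \ref{lem-Talyor-expansion-CMHZ-single-a}--\ref{lem-xu-yu-one}). The theorem is nothing but the output of this bookkeeping, though, and two of your concrete claims are wrong in ways that lead to a different identity.

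\emph{First, the kernel variable cannot be $x_0$.} With $\Phi(s;x_0)$, the residue at $s=n\geq 0$ is $(x_0x_1\cdots x_r)^{-n}(n+a_0)^{-q}\sum_{n<n_1<\cdots<n_r}\cdots$. The factor $(x_1\cdots x_r)^{-n}$ comes from the shift $n_l\mapsto n_l+n$. This choice of kernel has three consequences.
\begin{enumerate}[(a)]
\item The depth-$(r+1)$ value you obtain has first argument $(x_0x_1\cdots x_r)^{-1}$, not $x_0$.
\item The reversed values produced at $s=-n$ and $s=-n-a_j$ get top argument $x_0x_1\cdots x_r$ instead of $x_0^{-1}$.
\item The coefficients at $s=-a_0$ are $\widehat{\Li}_{j+1}(x_0^{-1};a_0)$.
\end{enumerate}
Your explanation in (ii) is also incorrect. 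You say $\widetilde{\bfx_{0,r}}$ appears because a shift absorbs the factors $x_l^n$, but the expansion at $-a_0$ (Lemma~\ref{lem-Talyor-expansion-CMHZ-single-a}) involves no shift. By Lemma~\ref{lem-xu-yu-one}, the argument of $\widehat{\Li}$ is always the inverse of the kernel variable. The paper computes with a free kernel variable $x$ and at the end sets $x=(x_0x_1\cdots x_r)^{-1}$. Then $(xx_1\cdots x_r)^{\mp n}=x_0^{\pm n}$ and $\widehat{\Li}_{j+1}(x^{-1};\cdot)=\widehat{\Li}_{j+1}(\widetilde{\bfx_{0,r}};\cdot)$, which is exactly the shape of \eqref{equ-mainthm-CMHZVs}. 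You need to make this choice of kernel from the outset.

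\emph{Second, the residue of $\Phi(s;x)$ at $s=0$ is $1$, not $0$.} Both $\phi(s;x)$ and $-\phi(-s;x^{-1})$ contribute $1/s$, and the correction $-1/s$ removes only one of them. This is Lemma~\ref{lem-rui-xu-one} with $n=0$; the residue is $x^{-n}$ at every integer $n$. So $s=0$ contributes $a_0^{-q}\Li_{\overrightarrow{\bfk}_{1,r}}(\overrightarrow{\bfx}_{1,r};\overrightarrow{(1+\bfa)}_{1,r})$. This is precisely the slice with first summation index equal to $1$ of $\widetilde{\bfx^{-1}_{0,r}}\Li_{\overrightarrow{\bfk}_{0,r}}(\overrightarrow{\bfx}_{0,r};\overrightarrow{\bfa}_{0,r})$. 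The residues at $n\geq 1$ alone, with the corrected kernel, only give $\Li_{\overrightarrow{\bfk}_{0,r}}(\overrightarrow{\bfx}_{0,r};\overrightarrow{(1+\bfa)}_{0,r})$. The paper sums \eqref{equ-proof-thm-two-residu-1} over $n\in\N_0$ and reindexes $n\mapsto n-1$; that is where the prefactor $\widetilde{\bfx^{-1}_{0,r}}$ and the parameters $\overrightarrow{\bfa}_{0,r}$ come from. Dropping the pole at $0$ leaves your identity short by that term.

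Your remark about collisions should also be made precise. Coincidences $-n-a_i=-n'-a_j$ do occur, for example when $a_i=a_j$. What the hypothesis $a_j-a_i\notin\N^-$ for $i<j$ guarantees is that no single term of the nested sum has two vanishing factors at the same point. Hence each residue splits into the single-factor contributions of Lemma~\ref{lem-Laurent-expansion-CMHZSs}. Separately, $a_j-a_0\notin\N^-$ keeps these points away from $-a_0$.
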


The following examples can be derived from Theorems \ref{mainthm-CMZVs} and \ref{mainthm-CMHZVs}.

\begin{exa} Let \( x, y, z \) be roots of unity with \( x \neq 1 \). Then we have
\begin{align*}
	&\Li_{1,2,2}(x,y,z) - \Li_{2,2}(y,z) \Li_1(x^{-1}) - \Li_2(z) \Li_{2,1}(y^{-1},x^{-1}) - \Li_{2,2,1}(z^{-1},y^{-1},x^{-1}) \\
	&\quad - \Big( \Li_1(xyz) - \Li_1((xyz)^{-1}) \Big) \Li_{2,2}(y,z) - 2\Li_{3,2}(y,z) - 2\Li_{2,3}(y,z) \\
	&\quad - \Li_2(z) \Li_3(x^{-1}) + 2 \Li_3(z) \Li_2(x^{-1}) - 3 \Li_4(z) \Li_1(x^{-1}) \\
	&\quad + \Big( \Li_1(xyz) - \Li_1((xyz)^{-1}) \Big) \Big( \Li_2(z) \Li_2(x^{-1}) - 2 \Li_3(z) \Li_1(x^{-1}) \Big) \\
	&\quad + \Big( \Li_2(xyz) + \Li_2((xyz)^{-1}) \Big) \Li_2(z) \Li_1(x^{-1}) \\
	&\quad - \Li_{2,3}(y^{-1}, x^{-1}) - 2 \Li_{3,2}(y^{-1}, x^{-1}) - 3 \Li_{4,1}(y^{-1}, x^{-1}) \\
	&\quad + \Big( \Li_1(xyz) - \Li_1((xyz)^{-1}) \Big) \Big( \Li_{2,2}(y^{-1}, x^{-1}) + 2 \Li_{3,1}(y^{-1}, x^{-1}) \Big) \\
	&\quad + \Big( \Li_2(xyz) + \Li_2((xyz)^{-1}) \Big) \Li_{2,1}(y^{-1}, x^{-1}) = 0
\end{align*}
and
\begin{align*}
	&(xyz)^{-1} \Li_{1,2,2}(x,y,z; a,b,c) - (yz)^{-1} \Li_{2,2}(y,z; b,c) \Li_1(x^{-1}; 1-a) \\
	&\quad - z^{-1} \Li_2(z; c) \Li_{2,1}(y^{-1},x^{-1}; 1-b, 1-a) - \Li_{2,2,1}(z^{-1},y^{-1},x^{-1}; 1-c, 1-b, 1-a) \\
	&\quad + \widehat{\Li}_{1}(xyz; a) \Li_{2, 2}(y,z; 1-a+b, 1-a+c) \\
	&\quad - \widehat{\Li}_{1}(xyz; b) \Li_2(z; 1-b+c) \Li_2(x^{-1}; 1+b-a) \\
	&\quad + 2 \widehat{\Li}_{1}(xyz; b) \Li_3(z; 1-b+c) \Li_1(x^{-1}; 1+b-a) \\
	&\quad - \widehat{\Li}_{2}(xyz; b) \Li_{2}(z; 1-b+c) \Li_{1}(x^{-1}; 1+b-a) \\
	&\quad - \widehat{\Li}_{1}(xyz; c) \Li_{2,2}(y^{-1},x^{-1}; 1+c-b, 1+c-a) \\
	&\quad - 2 \widehat{\Li}_{1}(xyz; c) \Li_{3,1}(y^{-1},x^{-1}; 1+c-b, 1+c-a) \\
	&\quad - \widehat{\Li}_{2}(xyz; c) \Li_{2,1}(y^{-1},x^{-1}; 1+c-b, 1+c-a) = 0,
\end{align*}
where $a,b,c\in \mathbb{C}\setminus\Z$ and $b-a,c-a,c-b\notin \N^-$.
\end{exa}

From Theorem \ref{mainthm-CMZVs} it can be seen that the expression in \eqref{equ-mainthm-CMZVs} consists entirely of cyclotomic multiple zeta values. In Theorem \ref{mainthm-CMHZVs}, although \eqref{equ-mainthm-CMHZVs} also appears to involve only cyclotomic multiple Hurwitz zeta values at first glance, assigning specific values to all parameters $a_j$ can lead to the emergence of cyclotomic multiple zeta values alongside cyclotomic multiple Hurwitz zeta values even though the former may be regarded as a special case of the latter. For example, setting $a_0=a_1=\cdots=a_r=1/2$ in Theorem \ref{mainthm-CMHZVs} yields the symmetry result for cyclotomic multiple $t$-values:

\begin{cor}\label{maincor-CMtVs} Let $k_0=q$ and $x_0=x$. For $(k_0,k_1,\ldots,k_r)\in \N^{r+1}$ and $x_0,x_1,\ldots,x_r$ are roots of unity with $(k_r,x_r)\neq(1,1)$ and $(k_0,x_0)\neq (1,1)$, we have
\begin{align}\label{equ-mainthm-CMtVs}
&\widetilde{\bfx^{-1}_{0,r}}\ti_{\overrightarrow{\bfk}_{0,r}}\left(\overrightarrow{\bfx}_{0,r}\right)+\sum_{j=0}^r (-1)^{|\overrightarrow{\bfk}_{0,j}|}\widetilde{\bfx_{j+1,r}^{-1}}\ti_{\overrightarrow{\bfk}_{j+1,r}}\left(\overrightarrow{\bfx}_{j+1,r}\right)
\ti_{\overleftarrow{\bfk}_{0,j}}\left(\overleftarrow{\bfx}^{-1}_{0,j}\right)\nonumber\\
&+\sum_{j+m=q-1,\atop j,m\geq 0}\sum_{|\overrightarrow{\bfm}_{1,r}|=m,\atop \forall\ m_l\geq 0} B_m\left(\overrightarrow{\bfk}_{1,r};\overrightarrow{\bfm}_{1,r}\right) \widehat{\ti}_{j+1}\left(\widetilde{\bfx_{0,r}}\right)\Li_{\overrightarrow{(\bfk+\bfm)}_{1,r}}\left(\overrightarrow{\bfx}_{1,r}\right)\nonumber\\
&+\sum_{j=1}^r \sum_{0\leq i+m\leq k_j-1,\atop i,m\geq 0}\sum_{|\overrightarrow{\bfm}_{1,r}|-m_j=m,\atop \forall\ m_l\geq 0}D_{q,m,i,j}\left(\overrightarrow{\bfk}_{1,r};\overrightarrow{\bfm}_{1,r}\right)\widehat{\ti}_{i+1}\left(\widetilde{\bfx_{0,r}}\right)\nonumber\\
&\quad\times \Li_{\overrightarrow{(\bfk+\bfm)}_{j+1,r}}\left(\overrightarrow{\bfx}_{j+1,r}\right)
\Li_{\overleftarrow{(\bfk+\bfm)}_{1,j-1},q+k_j-i-m-1}\left(\overleftarrow{\bfx}_{0,j-1}^{-1}\right)\nonumber\\
&=0.
\end{align}
\end{cor}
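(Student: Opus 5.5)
The corollary is the specialization $a_0=a_1=\cdots=a_r=\tfrac12$ of Theorem \ref{mainthm-CMHZVs}. The plan is to check that this choice is admissible there, and then identify every function appearing in \eqref{equ-mainthm-CMHZVs} under it.

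First, admissibility. We have $\tfrac12\in\mathbb{C}\setminus\Z$, and for $i<j$ the difference is $a_j-a_i=0\notin\N^-$. The hypotheses on $(k_0,\ldots,k_r)$ and on the roots of unity $x_0,\ldots,x_r$, including $(k_r,x_r)\neq(1,1)$ and $(k_0,x_0)\neq(1,1)$, are the same in both statements. So Theorem \ref{mainthm-CMHZVs} applies verbatim.

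Second, the dictionary. By \eqref{defn-CMHZV}, the parameter entry $b$ in $\Li_{\bfk}(\bfx;\ldots,b,\ldots)$ corresponds to the denominator $(n+b-1)^{k}$. Three cases arise.
\begin{itemize}
\item Entry $a_l=\tfrac12$: the denominator is $(n_l-\tfrac12)^{k_l}$. Comparing with \eqref{defn-CMtV}, $\Li_{\overrightarrow{\bfk}}(\overrightarrow{\bfx};\{\tfrac12\})=\ti_{\overrightarrow{\bfk}}(\overrightarrow{\bfx})$.
\item Entry $1-a_l=\tfrac12$: the same identification gives $\Li_{\overleftarrow{\bfk}_{0,j}}(\overleftarrow{\bfx}^{-1}_{0,j};\overleftarrow{(1-\bfa)}_{0,j})=\ti_{\overleftarrow{\bfk}_{0,j}}(\overleftarrow{\bfx}^{-1}_{0,j})$.
\item Shifted entries $1-a+a_l$, $1-a_j+a_l$ and $1+a_j-a_l$: each equals $1$. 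By \eqref{defn-CMZV} these become the cyclotomic multiple zeta values $\Li_{\overrightarrow{(\bfk+\bfm)}_{1,r}}(\overrightarrow{\bfx}_{1,r})$, $\Li_{\overrightarrow{(\bfk+\bfm)}_{j+1,r}}(\overrightarrow{\bfx}_{j+1,r})$ and $\Li_{\overleftarrow{(\bfk+\bfm)}_{1,j-1},q+k_j-i-m-1}(\overleftarrow{\bfx}^{-1}_{0,j-1})$.
\end{itemize}
Finally, since $1-a=\tfrac12$ when $a=\tfrac12$, the definitions give
\[
\widehat{\Li}_{j+1}(x;\tfrac12)=(-1)^j\ti_{j+1}(x^{-1})-x^{-1}\ti_{j+1}(x)=\widehat{\ti}_{j+1}(x).
\]
This applies both with $a=a_0$ and with $a=a_j$.

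Substituting these identifications term by term into \eqref{equ-mainthm-CMHZVs} yields \eqref{equ-mainthm-CMtVs}. The prefactors $\widetilde{\bfx^{-1}_{0,r}}$ and $\widetilde{\bfx^{-1}_{j+1,r}}$, the index ranges, and the coefficients $B_m$ and $D_{q,m,i,j}$ are all unchanged, since none of them depends on the parameters.

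The only point requiring care is the normalization of the parameter: in \eqref{defn-CMHZV} the entry $1+a$ corresponds to the shift $+a$ in the denominator. I expect no genuine obstacle. One should also confirm convergence: the cases $k_l=1$ with $x_l=1$ at the end of a $\ti$ or $\Li$ string are excluded by the stated hypotheses, exactly as in the theorem.
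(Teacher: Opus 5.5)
Your proposal is correct and is exactly the paper's argument. The paper obtains this corollary simply by setting $a_0=a_1=\cdots=a_r=\tfrac12$ in Theorem~\ref{mainthm-CMHZVs}, and your dictionary is the term-by-term check the paper leaves implicit: entries $\tfrac12$ give $\ti$; the shifted entries $1-a+a_l$, $1-a_j+a_l$, $1+a_j-a_l$ all equal $1$ and give cyclotomic multiple zeta values; and $\widehat{\Li}_{j+1}(x;\tfrac12)=\widehat{\ti}_{j+1}(x)$.

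One small correction to your closing convergence remark. When $x_0x_1\cdots x_r=1$, the coefficient $\widehat{\ti}_1(1)=\ti_1(1)-\ti_1(1)$ is not excluded by the hypotheses and must be read as a single combined convergent series. This is inherited verbatim from $\widehat{\Li}_1(1;a)$ in the theorem and does not affect the specialization.
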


It is evident from \eqref{equ-mainthm-CMtVs} that not only cyclotomic multiple \(t\)-values but also cyclotomic multiple zeta values appear in the expression. In email correspondence with Charlton, the author of this paper was informed that in Charlton and Hoffmann's work \cite{CharltonHoffman-MathZ2025}, where they established explicit formulas for the symmetry relations of regularized multiple \(t\)-values, classical multiple zeta values do indeed appear in the resulting expressions. However, by applying Murakami's profound theorem from \cite{Murakami2021}, which states that the motivic multiple $t$-value $t^\fm(k_1,\ldots,k_r)\ (\text{where all}\ k_j\in\{2,3\})$ generates all motivic multiple zeta values, one can ensure that their final formulas for the symmetry of regularized multiple \(t\)-values involve only regularized multiple \(t\)-values, that is no classical multiple zeta values appear. Yet Charlton also remarked that he found it surprising that such a seemingly natural symmetry result requires the machinery of motivic theory for its proof. Similarly, viewing this from Charlton's perspective, the author of this work may pose the question: \emph{Could there exist a particular family of cyclotomic multiple \(t\)-values that forms a basis for all cyclotomic multiple zeta values}?

It is clear that the maximum depth occurring in the cyclotomic multiple (Hurwitz) zeta/t-values in Theorems \ref{mainthm-CMZVs} and \ref{mainthm-CMHZVs}, as well as in Corollary \ref{maincor-CMtVs}, is \(r+1\), and there are only two such terms. All remaining terms are products of terms with depth \(\leq r\). Hence, from Theorems \ref{mainthm-CMZVs} and \ref{mainthm-CMHZVs}, along with Corollary \ref{maincor-CMtVs}, by removing all product terms in the expressions, we obtain the following concise symmetry results:
\begin{cor}\label{main-thm-corone}
Let $x_1,\ldots,x_r$ be roots of unity. For $\bfk_{1,r}=(k_1,\ldots,k_r)\in \N^r$ and $(k_r,x_r),\ (k_1,x_1)\neq (1,1)$, we have
\begin{align*}
&\Li_{k_1,\ldots,k_r}(x_1,\ldots,x_r)\equiv  (-1)^{k_1+\cdots+k_r-1}\Li_{k_r,\ldots,k_1}(x^{-1}_r,\ldots,x_1^{-1})\quad (\text{mod products}),
\end{align*}
where ``mod products" means discarding all product terms of cyclotomic multiple zeta values of smaller depth.
\end{cor}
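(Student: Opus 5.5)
The corollary should follow from Theorem \ref{mainthm-CMZVs} by relabelling indices and sorting its terms by depth. Write the corollary's data as $(k_1,\ldots,k_r)$, $(x_1,\ldots,x_r)$ and apply Theorem \ref{mainthm-CMZVs} with depth parameter $r-1$ in place of $r$. Concretely, set $q=k_0:=k_1$, $x=x_0:=x_1$, and let the theorem's $(k_1,\ldots,k_{r-1})$, $(x_1,\ldots,x_{r-1})$ be our $(k_2,\ldots,k_r)$, $(x_2,\ldots,x_r)$. The hypotheses then match exactly:
\begin{itemize}
\item $(k_0,x_0)\neq(1,1)$ in the theorem is our $(k_1,x_1)\neq(1,1)$;
\item the theorem's condition on its last index is our $(k_r,x_r)\neq(1,1)$.
\end{itemize}
This is precisely why the corollary needs both end conditions. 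The case $r=1$ uses the theorem with empty depth, where the formula reduces to $\Li_k(x)+(-1)^k\Li_k(x^{-1})$ plus constant and lower terms. Equivalently, in that case one runs the same residue computation with $\Li_\emptyset=1$.

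Next I will go through \eqref{equ-mainthm-CMZVs} term by term and isolate the depth-$r$ (i.e.\ maximal depth) contributions.
\begin{itemize}
\item The first term is $\Li_{k_1,\ldots,k_r}(x_1,\ldots,x_r)$.
\item In the sum over $j$, the index $j=r-1$ in the theorem's labelling (the last one) has $\overrightarrow{\bfk}_{j+1,r-1}=\emptyset$ and $\Li_\emptyset=1$. So it contributes $(-1)^{k_1+\cdots+k_r}\Li_{k_r,\ldots,k_1}(x_r^{-1},\ldots,x_1^{-1})$.
\item Every other $j$ gives a genuine product of two nonempty cyclotomic multiple zeta values, of depths $j+1$ and $r-1-j$, which is discarded.
\item The $B_m$-terms: each is a factor $\pm\Li_j$ at $\widetilde{\bfx_{0,r}}^{\pm1}$ multiplied by a depth $r-1$ value. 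For $j\ge1$ this is a product. For $j=0$ the convention $\Li_0(x)+\Li_0(x^{-1})=-1$ turns it into a rational multiple of a single value of depth $r-1$.
\item The $C_{q,m,i,j}$-terms are likewise either products, or, when $i=0$ and the complementary block $\overrightarrow{\bfk}_{j+1,\cdot}$ is empty, single values of depth at most $r-1$.
\end{itemize}
So all remaining terms are products or have depth strictly less than $r$.

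Combining these, \eqref{equ-mainthm-CMZVs} reads
\[
\Li_{k_1,\ldots,k_r}(x_1,\ldots,x_r)+(-1)^{k_1+\cdots+k_r}\Li_{k_r,\ldots,k_1}(x_r^{-1},\ldots,x_1^{-1})\equiv 0 .
\]
Moving the second term to the right-hand side gives the stated sign $(-1)^{k_1+\cdots+k_r-1}$.

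The only delicate point, which I expect to be the main obstacle, is the bookkeeping of the degenerate terms. These are the $\Li_0$-terms and the terms with an empty block, which produce single cyclotomic multiple zeta values of lower depth rather than honest products. The natural interpretation, consistent with the remark preceding the corollary that all remaining terms have depth $\le r$, is that ``mod products'' discards all terms of depth smaller than $r$. This includes such lower-depth single terms, which can be viewed as products with the constant $\Li_\emptyset=1$. I will check carefully that no term other than the two identified above has depth $r$. In particular:
\begin{itemize}
\item in the $C$-terms the reversed factor has depth $j\le r-1$;
\item in the $B$-terms the depth is $r-1$.
\end{itemize}
This confirms that only the two extremal terms survive.
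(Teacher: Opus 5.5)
For $r\ge 2$ your argument is the paper's. You specialize Theorem~\ref{mainthm-CMZVs} at depth parameter $r-1$ with $(k_0,x_0)=(k_1,x_1)$, keep the two depth-$r$ terms, and discard the rest.

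Your caution about the degenerate terms is justified. It is sharper than the paper's remark that ``all remaining terms are products.'' In the theorem's labelling, two families of terms are single values of depth $r-1$, not products:
\begin{itemize}
\item the $j=0$ terms of the $B$-sum;
\item the $i=0$, $j=r-1$ terms of the $C$-sum.
\end{itemize}
These do not vanish in general. For $(k_1,k_2)=(1,2)$ and $(x_1,x_2)=(-1,1)$ the theorem gives $\Li_{1,2}(-1,1)-\Li_{2,1}(1,-1)=\frac54\zeta(3)-2\zeta(2)\log 2$. The single term $\frac54\zeta(3)$ comes exactly from those two families. So ``mod products'' has to mean ``modulo products and lower depth,'' as you propose.

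The genuine error is your treatment of $r=1$. With empty depth, the $B$-sum in \eqref{equ-mainthm-CMZVs} has only the term $m=0$, $j=q$, namely $-\bigl(\Li_q(x)+(-1)^q\Li_q(x^{-1})\bigr)$. This is neither constant nor of lower depth. It cancels the two main terms exactly, so the theorem collapses to $0=0$. Rerunning the residue computation with $\Phi(s;x)/s^q$ gives the same tautology.

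Your bullet claim that the $B$-terms with $j\ge1$ are products silently uses that the depth-$(r-1)$ factor is nonempty. That is exactly what fails here.

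This cannot be patched, because the $r=1$ statement is false. Take $k_1=2$, $x_1=1$. The claim becomes $\zeta(2)\equiv-\zeta(2)$, i.e.\ $2\zeta(2)=\pi^2/3\equiv 0$. But discarding products of depth-$0$ values only removes rational constants. So the corollary must be restricted to $r\ge2$. That is also all the paper's derivation gives, since it needs at least one index after $k_0$.
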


\begin{cor}\label{main-thm-cortwo}
Let $x_1,\ldots,x_r$ be roots of unity. For $\bfk_{1,r}=(k_1,\ldots,k_r)\in \N^r$ and $(k_r,x_r),\ (k_1,x_1)\neq (1,1)$, we have
\begin{align*}
&\ti_{k_1,\ldots,k_r}(x_1,\ldots,x_r) \equiv  (-1)^{k_1+\cdots+k_r-1} (x_1\cdots x_r) \ti_{k_r,\ldots,k_1}(x^{-1}_r,\ldots,x_1^{-1})\quad (\text{mod products}),
\end{align*}
where ``mod products" means discarding all product terms of cyclotomic multiple zeta values and cyclotomic multiple $t$-values of smaller depth.
\end{cor}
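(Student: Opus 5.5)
The plan is to deduce Corollary \ref{main-thm-cortwo} from Corollary \ref{maincor-CMtVs} by a re-indexing, and then to discard everything that is a product of lower-depth values. Given $(k_1,\ldots,k_r)$ and $(x_1,\ldots,x_r)$ with $(k_1,x_1),(k_r,x_r)\neq(1,1)$, I will apply Corollary \ref{maincor-CMtVs} with depth parameter $r-1$. Concretely, I rename $q:=k_1$, $x:=x_1$ and take the remaining data to be $(k_2,\ldots,k_r)$ and $(x_2,\ldots,x_r)$. Then $k_0=q=k_1$ and $x_0=x=x_1$, so the hypotheses $(k_0,x_0)\neq(1,1)$ and $(k_r,x_r)\neq(1,1)$ are exactly the hypotheses of the corollary. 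The case $r=1$ is handled the same way with an empty tail: there the statement reads $\ti_{k}(x)\equiv(-1)^{k-1}x\,\ti_k(x^{-1})$.

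Next I sort the terms of \eqref{equ-mainthm-CMtVs} by depth.

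The first term is $(x_1\cdots x_r)^{-1}\ti_{k_1,\ldots,k_r}(x_1,\ldots,x_r)$, which has full depth $r$.

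The sum over $j=0,\ldots,r-1$ (in the original indexing) contains the product $\ti_{\overrightarrow{\bfk}_{j+1,r}}\cdot\ti_{\overleftarrow{\bfk}_{0,j}}$. This is a genuine product of two lower-depth values except when the first factor is empty. That happens only at the top index $j=r-1$ in the shifted notation, where the empty value is $1$ and the empty product $\widetilde{\bfx^{-1}}$ equals $1$. That surviving term is
\[
(-1)^{k_1+\cdots+k_r}\ti_{k_r,\ldots,k_1}(x_r^{-1},\ldots,x_1^{-1}).
\]

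The $B_m$-sum consists of terms $\widehat{\ti}_{j+1}(\widetilde{\bfx})\cdot \Li_{(\bfk+\bfm)}$. Each of these has the single-depth factor $\widehat{\ti}_{j+1}$ multiplying a cyclotomic multiple zeta value of depth $r-1$, so it is a product of lower depth.

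The $D$-sum terms are of the form $\widehat{\ti}_{i+1}\cdot\Li\cdot\Li$, containing at least the factor $\widehat{\ti}_{i+1}$ times another value. These are likewise products of values of smaller depth, with mixed $\ti$ and $\Li$ factors. This is why the statement's notion of ``mod products'' allows both cyclotomic multiple zeta values and cyclotomic multiple $t$-values.

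The one point that needs care is the bookkeeping of these product terms. In the $B_m$ and $D$ sums, every term carries a factor $\widehat{\ti}_{\cdot}$, which is a combination of depth-one $t$-values. So every such term is genuinely a product, provided that at least one further factor is present. If all remaining factors are empty, the term would instead be a pure depth-one value. That can only happen when $r=1$, which is the only case where a depth-one value counts as full depth, so in that case I will treat such terms explicitly. Even there the $B_m$-sum contains $\Li_{(\bfk+\bfm)}$ of depth $r-1\geq 0$, which is empty only when $r=1$. When $r=1$ the relation reduces to the one-variable parity formula, which I will verify directly from \eqref{equ-mainthm-CMtVs}. Alternatively, I will note that for depth one, ``mod products'' is understood to discard lower-weight constants.

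With the product terms discarded, the relation becomes
\[
(x_1\cdots x_r)^{-1}\ti_{k_1,\ldots,k_r}(x_1,\ldots,x_r)+(-1)^{k_1+\cdots+k_r}\ti_{k_r,\ldots,k_1}(x_r^{-1},\ldots,x_1^{-1})\equiv 0 .
\]
Multiplying by $x_1\cdots x_r$ gives the claimed congruence.
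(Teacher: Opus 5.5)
For $r\geq 2$ your argument is the paper's own. You apply Corollary~\ref{maincor-CMtVs} in depth $r-1$ with $q=k_1$, $x=x_1$, keep only the leading term and the top term of the first sum, discard the rest, and multiply by $x_1\cdots x_r$. Your depth bookkeeping there is correct; in particular, the last factor of every $D$-term has depth $j\geq 1$, so those terms are always genuine products.

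The step that fails is your treatment of $r=1$. With an empty tail, the $B_m$-sum collapses to the single term $\widehat{\ti}_{k_1}(x_1)$. This is a depth-one value, not a product, so it cannot be discarded. In that case \eqref{equ-mainthm-CMtVs} reads
\[
x_1^{-1}\ti_{k_1}(x_1)+(-1)^{k_1}\ti_{k_1}(x_1^{-1})+\Big((-1)^{k_1-1}\ti_{k_1}(x_1^{-1})-x_1^{-1}\ti_{k_1}(x_1)\Big)=0,
\]
which is a tautology. There is therefore nothing to verify ``directly'' from it.

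Moreover, the depth-one congruence is false. The discrepancy $\ti_{k}(x)-(-1)^{k-1}x\,\ti_{k}(x^{-1})=-x\,\widehat{\ti}_{k}(x)$ is exactly the quantity evaluated in Theorem~\ref{thm-main-two} at $a=1/2$, and it is generally nonzero. For instance, $k_1=1$, $x_1=-1$ satisfies the hypotheses and $\ti_1(-1)=-\pi/2$, so the claim would read $-\pi\equiv 0$. In depth one there are no nontrivial products of smaller depth that could absorb this. Similarly, $k_1=2$, $x_1=1$ gives $\ti_2(1)=\pi^2/2$ and the claim $\pi^2\equiv 0$.

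Your fallback of discarding ``lower-weight constants'' does not help either, since these discrepancies have the same weight as the values themselves. The corollary should be stated and proved only for $r\geq 2$. The paper's one-line deduction (``all remaining terms are products'') tacitly carries the same restriction, because in depth one the $B_m$-term is not a product.
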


\begin{cor}\label{main-thm-corthree}
Let $x_1,\ldots,x_r$ be roots of unity and $a_1,\ldots,a_r\in \mathbb{C}\setminus \Z$ satisfy $a_j-a_i\notin \N^-$ for $i< j\ (i,j\in \{1,2,\ldots,r\})$. For $\bfk_{1,r}=(k_1,\ldots,k_r)\in \N^r$ and $(k_r,x_r),\ (k_1,x_1)\neq (1,1)$, we have
\begin{align*}
&\Li_{k_1,\ldots,k_r}(x_1,\ldots,x_r;a_1,\ldots,a_r)\\
&\equiv  (-1)^{k_1+\cdots+k_r-1} (x_1\cdots x_r) \Li_{k_r,\ldots,k_1}(x^{-1}_r,\ldots,x_1^{-1};1-a_r,\ldots,1-a_1)\quad (\text{mod products}),
\end{align*}
where ``mod products" means discarding all product terms of cyclotomic multiple Hurwitz zeta values of smaller depth.
\end{cor}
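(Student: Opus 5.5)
We would deduce Corollary \ref{main-thm-corthree} from Theorem \ref{mainthm-CMHZVs} by discarding all products. The idea is to apply Theorem \ref{mainthm-CMHZVs} with depth $r-1$, so that the total depth $r$ is carried by the string $(k_0,k_1,\ldots,k_{r-1})$. Concretely, relabel the data of the corollary as $(q;k_1,\ldots,k_{r-1})$, $(x;x_1,\ldots,x_{r-1})$ and $(a;a_1,\ldots,a_{r-1})$, taking $k_0=q=k_1$, $x_0=x_1$, $a_0=a_1$ and shifting the remaining indices down by one. The hypotheses of Theorem \ref{mainthm-CMHZVs} then become exactly those of the corollary: $(k_0,x_0)=(k_1,x_1)\neq(1,1)$, the last entry satisfies $(k_r,x_r)\neq(1,1)$, all $a_j\notin\Z$, and $a_j-a_i\notin\N^-$ for $i<j$.

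Next we sort the terms of \eqref{equ-mainthm-CMHZVs} (now of total depth $r$) into depth-$r$ single values and genuine products.
\begin{itemize}
\item The first term is $(x_1\cdots x_r)^{-1}\Li_{k_1,\ldots,k_r}(x_1,\ldots,x_r;a_1,\ldots,a_r)$.
\item In the sum over $j$ from $0$ to $r-1$ (original indexing), each summand with $j<r-1$ is a product of two values of depth $\geq1$. The summand $j=r-1$ has an empty first factor, which equals $1$, and an empty prefactor $\widetilde{\bfx_{r,r-1}^{-1}}=1$. It therefore contributes
\[
(-1)^{k_1+\cdots+k_r}\Li_{k_r,\ldots,k_1}(x_r^{-1},\ldots,x_1^{-1};1-a_r,\ldots,1-a_1).
\]
\item Every term carrying a factor $\widehat{\Li}_{j+1}$ or $\widehat{\Li}_{i+1}$ is a product. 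Its first factor has depth $1$, and it multiplies values of depth at most $r-1$, since at most $r-1$ indices remain after one slot is absorbed. One must check in the $D$-sum that the merged index $q+k_j-i-m-1$ together with $\overleftarrow{(\bfk+\bfm)}_{1,j-1}$ gives depth at most $j\le r-1$.
\end{itemize}

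Modulo products, the identity then reads
\[
(x_1\cdots x_r)^{-1}\Li_{\bfk}(\bfx;\bfa)+(-1)^{|\bfk|}\Li_{\overleftarrow{\bfk}}\bigl(\overleftarrow{\bfx}^{-1};\overleftarrow{(1-\bfa)}\bigr)\equiv0 .
\]
Multiplying by $x_1\cdots x_r$ gives the claim with sign $(-1)^{|\bfk|-1}$.

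The main obstacle is bookkeeping rather than analysis. We must make sure that "mod products" is legitimate: every discarded term must be an honest product of convergent cyclotomic multiple Hurwitz zeta values of smaller depth, including the boundary cases $m=0$, $i=0$ and $j=1$ in the $D$-sum, where a depth could collapse to $0$. We must also check that the shifted parameters $1-a+a_l$ and $1+a_j-a_l$ avoid the nonpositive integers, so that these values are defined and nonsingular. The latter follows from $a_j-a_i\notin\N^-$ for $i<j$, which is exactly why this hypothesis appears. The depth-$1$ factor $\widehat{\Li}$ is $\Li_{i+1}(x;\cdot)$ with $x$ a root of unity; if $i=0$ and $x_1\cdots x_r=1$, we must confirm that the combination $\widehat{\Li}_1$ remains convergent. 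This holds as a combination of Hurwitz-type series, since the divergent parts cancel in the difference, and it is in any case part of the content of Theorem \ref{mainthm-CMHZVs}.
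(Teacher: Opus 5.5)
Your proposal is correct and is essentially the paper's argument: you specialize Theorem~\ref{mainthm-CMHZVs} (at depth $r-1$, where the paper works at depth $r$ and then relabels), keep the two full-depth terms $(x_1\cdots x_r)^{-1}\Li_{k_1,\ldots,k_r}(x_1,\ldots,x_r;a_1,\ldots,a_r)$ and $(-1)^{k_1+\cdots+k_r}\Li_{k_r,\ldots,k_1}(x_r^{-1},\ldots,x_1^{-1};1-a_r,\ldots,1-a_1)$, discard everything else as products, and multiply by $x_1\cdots x_r$. One caveat, left implicit both by you and by the paper, is that the argument needs $r\ge 2$: for $r=1$ the theorem at depth $0$ collapses to the tautology $x_1^{-1}\Li_{k_1}(x_1;a_1)+(-1)^{k_1}\Li_{k_1}(x_1^{-1};1-a_1)+\widehat{\Li}_{k_1}(x_1;a_1)=0$, and there the $\widehat{\Li}_{k_1}$ term is a single depth-one value, not a product.
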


\begin{re}\label{re-conj}
When $a_1=\cdots=a_r=a$ in Corollary \ref{main-thm-corthree}, this confirms the conjecture proposed by Rui in \cite{Rui2026} regarding the symmetry of cyclotomic multiple Hurwitz zeta values with $r$-variables and single-parameter (see \cite[Conjecture 4.6]{Rui2026}).

Clearly, Corollary \ref{main-thm-cortwo} can also be obtained by setting all $a_j = 1/2$ in Corollary \ref{main-thm-corthree}, whereas Corollary \ref{main-thm-corone} cannot be derived from Corollary \ref{main-thm-corthree}.
It should be emphasized that Charlton and Hoffman used generating function methods to derive the symmetry results for the regularization of multiple
$t$-values, see \cite[Thm. 1.1]{CharltonHoffman-MathZ2025}. Perhaps the approach used in the paper by Charlton and Hoffman can also be applied to study symmetry results for cyclotomic multiple Hurwitz zeta values. Interested readers are encouraged to explore this possibility.
It is worth emphasizing that by applying antipode relations, parity results for cyclotomic multiple zeta values, cyclotomic multiple $t$-values, and cyclotomic multiple Hurwitz zeta values can be further established. For recent progress on parity-related studies, refer to the literature \cite{Bouillot2014,LiXu2026,Hirose2025,IKZ2006,Panzer2017,Tsumura2004,Tsumura2008,Umezawa2025arxiv}.
\end{re}
It is important to emphasize that the term $\Li_{m+1}(x;1-a)-(-1)^m x\Li_{m+1}(x^{-1};a)$ in this work inherently possesses an equivalent formulation, as detailed in Theorem \ref{thm-main-two} below. (To ensure that the formulas presented in Theorems \ref{mainthm-CMZVs} and \ref{mainthm-CMHZVs} remain expressed entirely in terms of cyclotomic multiple Hurwitz zeta values, we deliberately avoid employing the identity given in \eqref{equ-thm-main-two}.)
\begin{thm}\label{thm-main-two} Let $x=e^{i\theta}$ and $\theta\in (0,2\pi)$. For $a\in \mathbb{C}\setminus \Z$ and $m\in \N_0$, we have
\begin{align}\label{equ-thm-main-two}
\Li_{m+1}(x;1-a)-(-1)^m x\Li_{m+1}(x^{-1};a)=\frac{\pi}{m!}\frac{d^m}{da^m}\left(ix^a-\cot(\pi a)x^a\right),
\end{align}
where $i^2=-1$.
\end{thm}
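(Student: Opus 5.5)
The plan is to rewrite the left-hand side of \eqref{equ-thm-main-two} as a single bilateral lattice sum over $\Z$. I then evaluate the case $m=0$ with a classical Fourier series, and obtain general $m$ by differentiating $m$ times in $a$.

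\emph{Step 1: reduction to a bilateral sum.} From \eqref{defn-CMHZV} with $r=1$ we have $\Li_{m+1}(x;1-a)=\sum_{n\ge1}x^n/(n-a)^{m+1}$. Likewise $\Li_{m+1}(x^{-1};a)=\sum_{n\ge1}x^{-n}/(n-1+a)^{m+1}$, so that
\[
x\Li_{m+1}(x^{-1};a)=\sum_{n\ge0}\frac{x^{-n}}{(n+a)^{m+1}}.
\]
Since $-(-1)^m/(n+a)^{m+1}=1/(-n-a)^{m+1}$, replacing $n$ by $-n$ in this second sum turns the left-hand side into
\[
F_m(a):=\sum_{n\in\Z}\frac{x^n}{(n-a)^{m+1}}.
\]
For $m=0$ this is understood as $\lim_{N\to\infty}\sum_{|n|\le N}$; this is legitimate because both one-sided series converge, as $x\ne1$ is a root of unity on the unit circle. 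Note that $F_m=\frac{1}{m!}\frac{d^m}{da^m}F_0$ formally, since $\frac{d^m}{da^m}(n-a)^{-1}=m!\,(n-a)^{-m-1}$.

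\emph{Step 2: the case $m=0$.} I would expand $g(\theta)=e^{ia\theta}$ on $(0,2\pi)$ in a Fourier series. Its coefficients are
\[
c_n=\frac1{2\pi}\int_0^{2\pi}e^{i(a-n)\theta}\,d\theta=\frac{e^{2\pi ia}-1}{2\pi i(a-n)}.
\]
The function $g$ is smooth on the open interval, so for $\theta\in(0,2\pi)$ the symmetric partial sums converge to $g(\theta)$ by Dirichlet's theorem. This gives
\[
\sum_{n\in\Z}\frac{e^{in\theta}}{n-a}=\frac{-2\pi i\,e^{ia\theta}}{e^{2\pi ia}-1}.
\]
The prefactor simplifies as
\[
\frac{-2\pi i}{e^{2\pi i a}-1}=\frac{-\pi e^{-\pi i a}}{\sin\pi a}=\pi\,(i-\cot\pi a),
\]
so $F_0(a)=\pi\bigl(ix^a-\cot(\pi a)x^a\bigr)$ with $x^a:=e^{ia\theta}$. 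This is exactly \eqref{equ-thm-main-two} for $m=0$.

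\emph{Step 3: differentiation, the only delicate point.} I must justify $F_m=\frac1{m!}F_0^{(m)}$. I would pair the terms $n$ and $-n$ and sum by parts in $n$ (Abel summation, using that the partial sums of $x^n$ are bounded since $x\ne1$). This shows that the symmetric partial sums of $F_0$ are analytic in $a$ and converge locally uniformly on $\mathbb{C}\setminus\Z$. Weierstrass's theorem then allows termwise differentiation of all orders. For $m\ge1$ the differentiated series are in fact absolutely and locally uniformly convergent, and they coincide with $F_m(a)$ from Step 1. Combining this with Step 2 yields the theorem.
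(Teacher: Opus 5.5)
Your proposal is correct and follows the paper's route: rewrite the left side as $\sum_{n\in\Z}x^n/(n-a)^{m+1}$, evaluate the case $m=0$ in closed form, and differentiate $m$ times in $a$. The differences are minor. You derive the $m=0$ evaluation from the Fourier series of $e^{ia\theta}$ on $(0,2\pi)$ instead of citing Whittaker--Watson, and you supply the locally uniform convergence and Weierstrass argument for termwise differentiation, which the paper leaves implicit. That convergence argument needs only $|x|=1$, $x\neq 1$, not that $x$ be a root of unity; this matters because the statement allows any $\theta\in(0,2\pi)$.
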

\begin{proof} From \cite[P. 123, Example 6.12]{WW2021}, we have
\begin{align*}
\sum_{n=-\infty}^\infty \frac{e^{in\theta}}{n-x}&=2\pi i\frac{e^{i\theta x}}{1-e^{2\pi ix}}=-\pi\frac{e^{ix(\theta-\pi)}}{\sin(\pi x)}\\
&=-\pi\cot(\pi a)x^a+\pi x^ai.
\end{align*}
On the other hand, noting the fact that
\begin{align*}
\sum_{n=-\infty}^\infty \frac{x^n}{n-a}&=\sum_{n=1}^\infty \frac{x^n}{n-a}+\sum_{n=-\infty}^0 \frac{x^n}{n-a}=\sum_{n=1}^\infty \frac{x^n}{n-a}-\sum_{n=1}^\infty \frac{x^{1-n}}{n+a-1}\\
&=\Li_1(x;1-a)-x\Li_1(x^{-1};a).
\end{align*}
Hence, we obtain
\begin{align*}
\Li_1(x;1-a)-x\Li_1(x^{-1};a)=-\pi\cot(\pi a)x^a+\pi x^ai.
\end{align*}
Finally, differentiating the above equation $m$ times with respect to a yields the desired result.
\end{proof}

It should be noted that Goncharov \cite[p.12]{Goncharov2001} introduced the following multi-series version of the series appearing in the proof of Theorem \ref{thm-main-two}:
\begin{align*}
B(\varphi_1,\ldots,\varphi_m|t_1,\ldots,t_m):=\sum_{-\infty<k_1<\cdots<k_m<\infty} \frac{e^{2\pi i(\varphi_1k_1+\cdots+\varphi_mk_m)}}{(k_1-t_1)\cdots (k_m-t_m)},
\end{align*}
where $\varphi_i\in (0,1)$. By investigating this function or similar generalizations, certain symmetry and parity results for multiple zeta values and their related variants can be derived, as seen in \cite{Hirose2025,CharltonHoffman-MathZ2025}.

\section{Lemmas and Expansions}
In this section, we provide certain Laurent expansions and Taylor expansions for the integrands of the contour integrals in \eqref{contour-integral-one} and \eqref{contour-integral-two}. These expansions will play a crucial role in the residue calculations in the following two sections.

Flajolet and Salvy \cite{Flajolet-Salvy} introduced a kernel function $\xi(s)$ satisfying two key properties:
(1) $\xi(s)$ is meromorphic on the whole complex plane.
(2) Over an infinite sequence of circles $|s| = \rho_k$ with $\rho_k \to \infty$, the growth condition $\xi(s) = o(s)$ holds.
Using these properties of the kernel function, Flajolet and Salvy established the following residue lemma.
\begin{lem}\emph{(cf.\ \cite{Flajolet-Salvy})}\label{lem-redisue-thm}
Let $\xi(s)$ be a kernel function and let $r(s)$ be a rational function which is $O(s^{-2})$ at infinity. Then
\begin{align}\label{residue-}
\sum\limits_{\alpha  \in O} {{\mathop{\rm Res}}{{\left( {r(s)\xi(s)},\alpha  \right)}}}  + \sum\limits_{\beta  \in S}  {{\mathop{\rm Res}}{{\left( {r(s)\xi(s)}, \beta  \right)}}}  = 0,
\end{align}
where $S$ is the set of poles of $r(s)$ and $O$ is the set of poles of $\xi(s)$ that are not poles $r(s)$. Here ${\mathop{\rm Re}\nolimits} s{\left( {r(s)},\alpha \right)} $ denotes the residue of $r(s)$ at $s= \alpha.$
\end{lem}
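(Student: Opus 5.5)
The plan is the classical argument of Flajolet and Salvy, i.e. the residue theorem applied on a sequence of expanding circles. Fix the kernel $\xi(s)$ and the rational function $r(s)$ with $r(s)=O(s^{-2})$ at infinity. The integrand $r(s)\xi(s)$ is meromorphic on $\mathbb{C}$, so its poles form a discrete set. Every such pole is either a pole of $\xi$ that is not a pole of $r$, i.e. a point of $O$, or a pole of $r$, i.e. a point of $S$. A point where $r$ has a pole but $\xi$ happens to vanish is still listed in $S$, and its residue may be zero, which does no harm.

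\textbf{Step 1 (choice of contours).} By property (2) of kernel functions there is a sequence of radii $\rho_k\to\infty$ with $\xi(s)=o(|s|)$ uniformly on $|s|=\rho_k$. We may also take each circle to avoid all poles of $r\xi$. This is possible because the poles are discrete, and the radii can be perturbed slightly if needed; in the standard setting this is built into the choice of $\rho_k$, e.g. $\rho_k=k+\tfrac12$ for cotangent-type kernels.

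\textbf{Step 2 (residue theorem).} For each $k$ we have
\[
\frac{1}{2\pi i}\oint_{|s|=\rho_k} r(s)\xi(s)\,ds=\sum_{|\alpha|<\rho_k}\Res\big(r(s)\xi(s),\alpha\big),
\]
where the sum runs over all poles inside the circle. These poles split into the ones in $O$ and the ones in $S$.

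\textbf{Step 3 (vanishing of the integral).} On $|s|=\rho_k$ we have $|r(s)|\le C\rho_k^{-2}$ for large $k$ and $|\xi(s)|=o(\rho_k)$. Hence
\[
\left|\oint_{|s|=\rho_k} r(s)\xi(s)\,ds\right|\le 2\pi\rho_k\cdot C\rho_k^{-2}\cdot o(\rho_k)=o(1).
\]
Letting $k\to\infty$, the partial sums of residues taken over the discs $|s|<\rho_k$ tend to $0$. The set $S$ is finite, so the $S$-part is a fixed finite sum for large $k$. The identity \eqref{residue-} therefore holds, with the sum over $O$ understood as the limit of these symmetric partial sums.

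The only delicate point is the meaning of the infinite sum over $O$: it converges in the sense of the limit along the circles $|s|=\rho_k$. In the applications of this paper the residues at the integer poles decay like $n^{-2}$ or faster, so the series converges absolutely and the order of summation does not matter. I expect this bookkeeping to be the main obstacle, though a minor one: it requires checking that the growth bound holds uniformly on the chosen circles.
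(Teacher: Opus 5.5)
Your proof is correct and follows the classical Flajolet--Salvy contour argument that the paper cites: integrate $r\xi$ over $|s|=\rho_k$, bound the integral by $2\pi\rho_k\cdot O(\rho_k^{-2})\cdot o(\rho_k)=o(1)$, and read the identity as a limit of residue sums over the discs $|s|<\rho_k$; no perturbation of the radii is needed (and a perturbation could even lose the bound), because the uniform bound $\xi(s)=o(|s|)$ on $|s|=\rho_k$ already keeps those circles off the poles of $\xi$. Your closing aside on absolute convergence is wrong for $q=1$, where the paper actually uses the weaker condition $r\xi=o(s^{-1})$: the residues at $-n$ then contain terms like $(xx_1\cdots x_r)^n/n$ that are only conditionally summable, so the disc-limit reading you describe is genuinely needed there.
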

Notably, Lemma \ref{lem-redisue-thm} also holds under the weaker condition $r(s)\xi(s)=o(s^{-1})$. It is straightforward to show that the contour integrals corresponding to \eqref{contour-integral-one} and \eqref{contour-integral-two} must equal zero. Therefore, by the residue theorem, the sum of the residues of the integrand at all its poles in the complex plane must also equal zero. To compute these residues, it is necessary to determine the order of each pole. For this purpose, we need to clarify the Laurent expansion of the integrand at its poles. To better describe these expansions, we need to provide the following definitions: for $\overrightarrow{\bfk}_{1,r}=(k_1,\ldots,k_r)\in\N^r$, positive integer $n$, and $\bfx=(x_1,\ldots,x_r)\in \mathbb{C}^r$, we define the \emph{multiple harmonic sums} with $r$-variables by
\begin{align}
\zeta_n\left(\overrightarrow{\bfk}_{1,r};\overrightarrow{\bfx}_{1,r}\right)\equiv \zeta_n(k_1,\ldots,k_r;x_1,\ldots,x_r):=\sum\limits_{0<n_1<\cdots<n_r\leqslant n } \frac{x_1^{n_1}\cdots x_r^{n_r}}{n_1^{k_1}\cdots n_r^{k_r}}\label{MHSsr}.
\end{align}
If $n<r$ then $\zeta_n\left(\overrightarrow{\bfk}_{1,r};\overrightarrow{\bfx}_{1,r}\right):=0$ and ${\zeta _n}(\emptyset;\emptyset ):=1$. Furthermore, we can similarly define the Hurwitz-type generalization of the aforementioned multiple harmonic sums with $r$-variables as follows:
\begin{align}
\zeta_n\left(\overrightarrow{\bfk}_{1,r};\overrightarrow{\bfx}_{1,r};\overrightarrow{\bfa}_{1,r}\right)&\equiv \zeta_n(k_1,\ldots,k_r;x_1,\ldots,x_r;a_1,\ldots,a_r)\nonumber\\
&:=\sum\limits_{0<n_1<\cdots<n_r\leqslant n } \frac{x_1^{n_1}\cdots x_r^{n_r}}{(n_1+a_1)^{k_1}\cdots (n_r+a_r)^{k_r}}\label{MHtHSsr},
\end{align}
where $\overrightarrow{\bfa}_{1,r}=(a_1,\ldots,a_r)$ and $a_j\in \mathbb{C}\setminus \N^-\ (j=1,2,\ldots,r)$. Obviously, according to the definitions of \eqref{defn-CMHZV}, \eqref{defn-CMZV}, \eqref{MHSsr}, and \eqref{MHtHSsr}, when all $x_j$ are roots of unity and $(k_r, x_r) \neq (1, 1)$, letting $n$ approach infinity yields the limits
\begin{align}
\lim_{n\rightarrow \infty} \zeta_n\left(\overrightarrow{\bfk}_{1,r};\overrightarrow{\bfx}_{1,r}\right)=\Li_{\overrightarrow{\bfk}_{1,r}}\left(\overrightarrow{\bfx}_{1,r}\right)
\end{align}
and
\begin{align}
\lim_{n\rightarrow \infty} \zeta_n\left(\overrightarrow{\bfk}_{1,r};\overrightarrow{\bfx}_{1,r};\overrightarrow{\bfa}_{1,r}\right)=\Li_{\overrightarrow{\bfk}_{1,r}}\left(\overrightarrow{\bfx}_{1,r};\overrightarrow{(1+\bfa)}_{1,r}\right).
\end{align}

First, the following Laurent or Taylor expansions are given in \cite{LiXu2026,Rui-Xu2025}:

\begin{lem}(\cite[Eq. (2.6)]{Rui-Xu2025})\label{lem-rui-xu-one} For $n\in \Z$, if $|s-n|<1$, then
\begin{align}\label{LEPhi-function}
\Phi(s;x)=x^{-n} \left(\frac1{s-n}+\sum_{m=0}^\infty \Big((-1)^m\Li_{m+1}(x)-\Li_{m+1}\Big(x^{-1}\Big)\Big)(s-n)^m \right).
\end{align}
\end{lem}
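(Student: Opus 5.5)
The statement to prove is Lemma \ref{lem-rui-xu-one}, the Laurent expansion of $\Phi(s;x)$ about an integer $n$. The plan is to first reduce to $n=0$ using a quasi-periodicity of $\Phi$, and then expand the two generalized digamma functions termwise.

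\textbf{Step 1 (shift identity).} From the definition \eqref{gdigf},
\begin{align*}
\phi(s+1;x)=\sum_{k\ge0}\frac{x^k}{k+1+s}=x^{-1}\Big(\phi(s;x)-\frac1s\Big).
\end{align*}
Similarly,
\begin{align*}
\phi(-s-1;x^{-1})=\sum_{k\ge0}\frac{x^{-k}}{k-1-s}=\frac{x}{-1-s}+x\,\phi(-s;x^{-1}).
\end{align*}
Substituting both into \eqref{etrif}, the boundary terms combine:
\begin{align*}
\Phi(s+1;x)=x^{-1}\phi(s;x)-\frac{x^{-1}}{s}-x\,\phi(-s;x^{-1})+\frac{x}{1+s}-\frac1{s+1}.
\end{align*}
This does not look immediately periodic, so the cleaner route is the symmetric-sum representation. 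For $x$ a root of unity,
\begin{align*}
\Phi(s;x)=\lim_{N\to\infty}\sum_{k=-N}^{N}\frac{x^{k}}{k+s},
\end{align*}
with the $k=0$ term $1/s$ counted once; this follows from \eqref{etrif} after writing $-\phi(-s;x^{-1})=\sum_{k\ge0}x^{-k}/(s-k)$. From this representation, reindexing $k\mapsto k-1$ gives $\Phi(s+1;x)=x^{-1}\Phi(s;x)$, up to boundary terms that vanish in the symmetric limit. One caveat: this reindexing needs to be justified carefully for $x=1$, where the symmetric sum is $\pi\cot\pi s$ and the boundary terms still tend to $0$. Iterating gives $\Phi(s;x)=x^{-n}\Phi(s-n;x)$. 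Hence it suffices to prove the case $n=0$ with $s$ replaced by $s-n$.

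\textbf{Step 2 (expansion at $0$).} For $|s|<1$, split off the $k=0$ term $1/s$. The remaining terms give
\begin{align*}
\sum_{k\ge1}\frac{x^k}{k+s}=\sum_{m\ge0}(-1)^m s^m\sum_{k\ge1}\frac{x^k}{k^{m+1}}
=\sum_{m\ge0}(-1)^m\Li_{m+1}(x)s^m,
\end{align*}
and
\begin{align*}
\sum_{k\ge1}\frac{x^{-k}}{s-k}=-\sum_{m\ge0}s^m\Li_{m+1}(x^{-1}).
\end{align*}
Adding these reproduces exactly the coefficients in \eqref{LEPhi-function}. For $x=1$ the $m=0$ terms must be paired, giving $\Li_1(1)-\Li_1(1)$ interpreted via the combined sum $\sum_{k\ge1}2s/(s^2-k^2)$, whose even coefficients vanish; so the formula holds with the convention that the $m=0$ coefficient is $0$.

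\textbf{Main obstacle.} The main obstacle is justifying the interchange of summations. For $m\ge1$ the series converge absolutely and uniformly on $|s|\le\rho<1$. For $m=0$ with $x\ne1$, $\Li_1(x^{\pm1})$ converges only conditionally. I would handle this by Abel summation: the partial sums of $x^k$ are bounded, so $\sum_{k\ge1} x^k\big(\tfrac1{k+s}-\tfrac1k\big)$ converges absolutely and uniformly for $|s|\le\rho$. The symmetric-limit reindexing in Step 1 is justified by the same bounded-partial-sum argument.
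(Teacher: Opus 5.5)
Your argument is correct and follows essentially the standard route. The paper does not prove this lemma; it simply cites \cite[Eq. (2.6)]{Rui-Xu2025}. Its own proof of the companion Lemma \ref{lem-xu-yu-one} does the same termwise geometric expansion of the two one-sided series directly at the expansion point, with a finite reindexing, and that reindexing is what you package as the quasi-periodicity $\Phi(s+1;x)=x^{-1}\Phi(s;x)$.

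One correction to your abandoned first attempt. Reindexing $k=j+1$ gives $\phi(-s-1;x^{-1})=x^{-1}\phi(-s;x^{-1})-\frac{1}{1+s}$, not $x\,\phi(-s;x^{-1})+\frac{x}{-1-s}$. With the correct identity every boundary term cancels, so for $x\neq 1$ the shift relation follows from \eqref{etrif} by pure algebra. Your conclusion that this route ``does not look immediately periodic'' came from that slip. For $x=1$, where the individual $\phi$-series diverge, your symmetric-sum argument is still the right tool.
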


\begin{lem}(\cite[Eq. (5.3)]{LiXu2026}) For $(k_1,\ldots,k_r)\in\N^r$ and roots of unity $x_1,\ldots,x_r$ with $(k_r,x_r)\neq(1,1)$, if $|s|<1$, we have
\begin{align}\label{Taylor-expansion-mhpolyf-cases}
&\Li_{k_1,\ldots,k_r}(x_1,\ldots,x_r;1+s)=\sum_{m=0}^\infty \sum_{|\overrightarrow{\bfn}_{1,r}|=m,\atop \forall n_l\geqslant 0} B_m\left(\overrightarrow{\bfk}_{1,r};\overrightarrow{\bfm}_{1,r}\right) \Li_{k_1+n_1,\ldots,k_r+n_r}(x_1,\ldots,x_r)s^m.
\end{align}
\end{lem}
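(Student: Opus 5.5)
The statement to prove is the Taylor expansion \eqref{Taylor-expansion-mhpolyf-cases} of $\Li_{k_1,\ldots,k_r}(x_1,\ldots,x_r;1+s)$ about $s=0$. (In the formula the index vector written $\overrightarrow{\bfm}_{1,r}$ in $B_m$ should be read as $\overrightarrow{\bfn}_{1,r}$.)

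I would expand each factor of the defining series in powers of $s$ and then interchange the summations. By \eqref{defn-CMHZV} with all $a_l=s$,
\[
\Li_{k_1,\ldots,k_r}(x_1,\ldots,x_r;1+s)=\sum_{0<n_1<\cdots<n_r}\prod_{l=1}^r\frac{x_l^{n_l}}{(n_l+s)^{k_l}} .
\]
For $|s|<1\le n_l$ the negative binomial series gives
\[
\frac{1}{(n_l+s)^{k_l}}=\sum_{m_l\ge0}(-1)^{m_l}\binom{m_l+k_l-1}{k_l-1}\frac{s^{m_l}}{n_l^{k_l+m_l}} .
\]
Multiplying these $r$ expansions and collecting the terms with $m_1+\cdots+m_r=m$ produces the sign $(-1)^m$ and the product of binomials. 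This is exactly $B_m(\overrightarrow{\bfk}_{1,r};\overrightarrow{\bfm}_{1,r})$. Summing over $n_1<\cdots<n_r$ termwise then yields $\Li_{k_1+m_1,\ldots,k_r+m_r}(x_1,\ldots,x_r)\,s^m$, which is the stated formula.

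The real work is justifying the interchange of the $n$-sums with the $m$-sums. When $k_r\ge2$ everything converges absolutely. The bound
\[
\sum_{m_l}\binom{m_l+k_l-1}{k_l-1}\frac{|s|^{m_l}}{n_l^{k_l+m_l}}=\frac{1}{(n_l-|s|)^{k_l}}
\]
reduces the question to the convergence of $\sum\prod_l (n_l-|s|)^{-k_l}$. This converges for $|s|<1$, since only $n_r$ needs exponent at least $2$ and the remaining factors are dominated by powers of $\log n_r$. Fubini then applies.

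The main obstacle is the case $k_r=1$, $x_r\ne1$, where the outer series converges only conditionally. I would handle it by splitting off the $m_r=0$ part. The terms with $m_r\ge1$ carry exponent $k_r+m_r\ge2$ and are treated absolutely as above. For the $m_r=0$ part, fix $n_1<\cdots<n_{r-1}$ and sum over $n_r$ first: the remaining sum is $\sum_{n_r>n_{r-1}}x_r^{n_r}/n_r$, whose tail is $O(1/n_{r-1})$ by Abel summation using $x_r\ne1$. This restores absolute convergence in the inner variables $n_1,\ldots,n_{r-1}$ with an extra decaying factor, so the expansions in $m_1,\ldots,m_{r-1}$ can again be interchanged. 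Equivalently, one could argue with the truncated sums $\zeta_N(\cdots;s)$, where the identity is a finite rearrangement, and pass to $N\to\infty$ using the same Abel-summation estimate uniformly on compact subsets of $|s|<1$, invoking Weierstrass's theorem on uniform limits of analytic functions to identify the Taylor coefficients.
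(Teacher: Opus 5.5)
Your proof is correct and is essentially the paper's argument. The paper only cites this expansion from \cite{LiXu2026}, but its proof of Lemma \ref{lem-Talyor-expansion-CMHZ-single-a} (which reduces to this statement at $a=a_1=\cdots=a_r=0$) is exactly your negative-binomial expansion of each factor $(n_l+s)^{-k_l}$ followed by an interchange of summations; your justification of that interchange, including the Abel-summation treatment of the conditionally convergent case $k_r=1$, $x_r\neq 1$, is a welcome addition that the paper leaves implicit.
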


\begin{lem}(\cite[Eq. (5.4)]{LiXu2026})\label{lem-Laurent-expansion-mhpolyf} Let $n\in \N$. For $(k_1,\ldots,k_r)\in\N^r$ and roots of unity $x_1,\ldots,x_r$ with $(k_r,x_r)\neq(1,1)$, if $|s+n|<1$, we have
\begin{align}\label{Laurent-expansion-mhpolyf}
&\Li_{k_1,\ldots,k_r}(x_1,\ldots,x_r;1+s)\nonumber\\
&=\sum_{j=0}^r \sum_{m=0}^\infty \sum_{|\overrightarrow{\bfn}_{1,r}|=m,\atop \forall n_l\geq 0}A_j\left(\overrightarrow{\bfk}_{1,r};\overrightarrow{\bfn}_{1,r}\right) (x_1\cdots x_r)^n\zeta_{n-1}\left(\overleftarrow{(\bfk+\bfn)}_{1,j};\overleftarrow{\bfx}_{1,j}^{-1}\right)\nonumber\\&\qquad\qquad\qquad\qquad\qquad\qquad\times\Li_{\overrightarrow{(\bfk+\bfn)}_{j+1,r}}\left(\overrightarrow{\bfx}_{j+1,r}\right)(s+n)^m
\nonumber\\
&\quad+\sum_{j=1}^r\sum_{m=0}^\infty \sum_{|\bfn_r|-n_j=m,\atop \forall n_l\geq 0} \widetilde{A}_j\left(\overrightarrow{\bfk}_{1,r};\overrightarrow{\bfn}_{1,r}\right) (x_1\cdots x_r)^n
\zeta_{n-1}\left(\overleftarrow{(\bfk+\bfn)}_{1,j-1};\overleftarrow{\bfx}_{1,j-1}^{-1}\right)\nonumber\\&\qquad\qquad\qquad\qquad\qquad\qquad\times
\Li_{\overrightarrow{(\bfk+\bfn)}_{j+1,r}}\left(\overrightarrow{\bfx}_{j+1,r}\right)(s+n)^{m-k_j},
\end{align}
where
\begin{align}
&A_j\left(\overrightarrow{\bfk}_{1,r};\overrightarrow{\bfn}_{1,r}\right):=(-1)^{|\overrightarrow{\bfk}_{1,j}|+|\overrightarrow{\bfn}_{j+1,r}|}\prod\limits_{l=1}^r \binom{n_l+k_l-1}{k_l-1} ,\\
&\widetilde{A}_j\left(\overrightarrow{\bfk}_{1,r};\overrightarrow{\bfn}_{1,r}\right):=(-1)^{|\overrightarrow{\bfk}_{1,j-1}|+|\overrightarrow{\bfn}_{j+1,r}|} \left(\prod\limits_{l=1,\atop l\neq j}^r \binom{n_l+k_l-1}{k_l-1} \right).
\end{align}
\end{lem}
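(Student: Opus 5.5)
The plan is to expand the defining series directly, split it according to where the pole index $n$ falls among the summation indices, and then expand each factor binomially in $t:=s+n$. We start from
\[
\Li_{k_1,\ldots,k_r}(x_1,\ldots,x_r;1+s)=\sum_{0<n_1<\cdots<n_r}\prod_{l=1}^r\frac{x_l^{n_l}}{(n_l+s)^{k_l}},
\]
valid for $s$ away from $\N^-$. Near $s=-n$, the only factors that can vanish are those with $n_l=n$. So we partition the index set $\{0<n_1<\cdots<n_r\}$ into two kinds of pieces.
\begin{enumerate}[(i)]
\item For $0\le j\le r$: the pieces $n_1<\cdots<n_j<n<n_{j+1}<\cdots<n_r$, where no index equals $n$. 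These give the holomorphic part.
\item For $1\le j\le r$: the pieces $n_j=n$, with $n_1<\cdots<n_{j-1}<n<n_{j+1}<\cdots<n_r$. These give the polar part $(s+n)^{m-k_j}$.
\end{enumerate}

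In each piece we re-index by setting $n_l=n-p_l$ for $l<j$ (or $l\le j$ in case (i)) and $n_l=n+p_l$ for $l>j$. The lower indices then satisfy $0<p_{j}<\cdots<p_1\le n-1$ (reversed order), and the upper ones satisfy $0<p_{j+1}<\cdots<p_r$. The monomials factor as
\[
x_l^{n_l}=x_l^{n}x_l^{\mp p_l},
\]
which produces the common factor $(x_1\cdots x_r)^n$ and the variables $x_l^{-1}$ on the lower block and $x_l$ on the upper block.

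Each factor is then expanded binomially in $t$:
\[
(p+t)^{-k}=\sum_{m\ge0}(-1)^m\binom{m+k-1}{k-1}p^{-k-m}t^m,\qquad (t-p)^{-k}=(-1)^k\sum_{m\ge0}\binom{m+k-1}{k-1}p^{-k-m}t^m,
\]
both valid for $|t|<1\le p$. Multiplying these out and collecting powers of $t$, the lower block sums to $\zeta_{n-1}\big(\overleftarrow{(\bfk+\bfn)}_{1,j};\overleftarrow{\bfx}^{-1}_{1,j}\big)$. The upper block sums to $\Li_{\overrightarrow{(\bfk+\bfn)}_{j+1,r}}(\overrightarrow{\bfx}_{j+1,r})$. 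The sign is $(-1)^{|\bfk_{1,j}|+|\bfn_{j+1,r}|}$ together with the product of binomials, which is exactly $A_j$. In case (ii) the factor with $n_j=n$ contributes exactly $x_j^n t^{-k_j}$ with no expansion. The remaining factors give $\widetilde A_j$, with the lower block now of depth $j-1$, and the overall power is $t^{m-k_j}$, where $m$ counts only the $n_l$ with $l\neq j$.

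The main obstacle is justifying the interchange of the $p$-summations with the $t$-expansion, since for roots of unity the series is only conditionally convergent when some exponent equals $1$. In the upper block, $k_l+n_l\ge2$ whenever $n_l\ge1$, so only the $t^0$-coefficient, with $k_r=1$ and $x_r\ne1$, is delicate. I would handle this by truncating $p_r\le N$. For the finite truncation all the above manipulations are finite rearrangements plus absolutely convergent power series in $|t|<1$. I would then show, using Abel summation (the Dirichlet test, with partial sums of $x_r^{p}$ bounded because $x_r\ne1$ is a root of unity), that both sides converge uniformly on a small closed disc around $t=0$. Uniform convergence allows passing to the limit coefficientwise, giving the stated Laurent expansion on $|s+n|<1$.
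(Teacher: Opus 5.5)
Your proposal is correct and follows essentially the paper's route: the paper cites this lemma from \cite{LiXu2026}, and its own proofs of the Hurwitz analogues (Lemmas \ref{lem-Talyor-expansion-CMHZSs} and \ref{lem-Laurent-expansion-CMHZSs}) likewise split the index range at $n$, reverse-index the block below $n$, treat the factor with $n_j=n$ as the polar part, and expand the remaining denominators binomially in $s+n$. Your truncation and Abel-summation step for $k_r=1$, $x_r\neq1$ supplies a justification the paper omits, with two small corrections: run the uniform-convergence argument on a circle $|s+n|=\rho<1$ (or after multiplying by $(s+n)^{\max_j k_j}$) rather than on a closed disc about $s=-n$, where the function has a pole; and note that the delicate terms are all those with $n_r=0$ in the last upper factor, which appear at every power of $s+n$, not only in the constant term.
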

In fact, the analytic part in \eqref{Laurent-expansion-mhpolyf} is redundant, as it will equal zero in the subsequent residue computation.
Through calculations similar to the proofs of \cite[Thms. 2.2 and 2.3]{LiXu2026}, we can obtain the following Taylor or Laurent expansions of the cyclotomic multiple Hurwitz zeta values:

\begin{lem}\label{lem-Talyor-expansion-CMHZ-single-a} Let $a,a_1,\ldots,a_r\in \mathbb{C}\setminus \N^-$ and $a_j-a\notin \N^-$ for any $j=1,2,\ldots,r$. For $(k_1,\ldots,k_r)\in\N^r$ and roots of unity $x_1,\ldots,x_r$ with $(k_r,x_r)\neq(1,1)$, if $|s+a|<1$, we have
\begin{align}\label{Talyor-expansion-CMHZ-single-a}
&\Li_{k_1,\ldots,k_r}(x_1,\ldots,x_r;1+a_1+s,\ldots,1+a_r+s)\nonumber\\
&=\sum_{m=0}^\infty \sum_{|\overrightarrow{\bfm}_{1,r}|=m,\atop \forall\ m_l\geq 0} B_m\left(\overrightarrow{\bfk}_{1,r};\overrightarrow{\bfm}_{1,r}\right)\Li_{\overrightarrow{(\bfk+\bfm)}_{1,r}}\left(\overrightarrow{\bfx}_{1,r};\overrightarrow{(1+\bfa-a)}_{1,r}\right)(s+a)^m.
\end{align}
\end{lem}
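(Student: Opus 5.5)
The plan is to expand each summand of the series defining the left-hand side of \eqref{Talyor-expansion-CMHZ-single-a} in powers of $s+a$ and then resum. Write $t:=s+a$. Then $n_l+a_l+s=n_l+(a_l-a)+t$, so
\begin{align*}
\Li_{k_1,\ldots,k_r}(x_1,\ldots,x_r;1+a_1+s,\ldots,1+a_r+s)=\sum_{0<n_1<\cdots<n_r}\prod_{l=1}^r\frac{x_l^{n_l}}{(n_l+b_l+t)^{k_l}},\qquad b_l:=a_l-a .
\end{align*}
The hypothesis $a_l-a\notin\N^-$ guarantees that $n_l+b_l\neq 0$ for every $n_l\ge 1$, so each factor is analytic at $t=0$. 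For each factor I would use the binomial series
\begin{align*}
\frac{1}{(n_l+b_l+t)^{k_l}}=\sum_{m_l=0}^\infty (-1)^{m_l}\binom{m_l+k_l-1}{k_l-1}\frac{t^{m_l}}{(n_l+b_l)^{k_l+m_l}},
\end{align*}
which is valid for $|t|<|n_l+b_l|$. Multiplying the $r$ expansions and grouping by $m=|\overrightarrow{\bfm}_{1,r}|$ produces exactly the coefficient $B_m(\overrightarrow{\bfk}_{1,r};\overrightarrow{\bfm}_{1,r})$. After interchanging the sums over $n$ and $m$, the inner $n$-sum is $\Li_{\overrightarrow{(\bfk+\bfm)}_{1,r}}(\overrightarrow{\bfx}_{1,r};\overrightarrow{(1+\bfa-a)}_{1,r})$ by definition \eqref{defn-CMHZV}, and this gives \eqref{Talyor-expansion-CMHZ-single-a}.

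The main obstacle is justifying the interchange of summations, because the series over $n$ is only conditionally convergent when some $k_l=1$, in particular $k_r=1$ with $x_r\ne1$. I would split into two parts. Every term with some $m_l\ge1$ is absolutely summable: the weight then exceeds $r$, and $|n_l+b_l|\gtrsim n_l$ for large $n_l$. The only delicate piece is the $m=0$ term, which is just the convergent value itself.

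I would handle this most cleanly as in the proofs of \cite[Thms. 2.2 and 2.3]{LiXu2026}. First work with the truncated sums $\zeta_N$ from \eqref{MHtHSsr}, shifted by $b_l+t$. These are finite sums of rational functions in $t$, so their expansions are legitimate. Next show that, as functions of $t$, they converge uniformly on a small disc around $0$. The tail of the outermost sum over $n_r$ can be controlled by Abel summation: the partial sums of $x_r^{n_r}$ are bounded since $x_r$ is a root of unity $\ne1$ when $k_r=1$, and $(n_r+b_r+t)^{-1}$ has total variation $O(1/N)$ uniformly in $t$. The inner factors are handled by the same summation-by-parts estimate applied to the nested sums. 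By Weierstrass' theorem the limit is analytic, and its Taylor coefficients are the limits of the Taylor coefficients of the truncations, which are the $\Li$ values above.

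It remains to address the radius. The expansion holds on the disc $|t|<\min_{l,\,n\ge1}|n+b_l|$. The statement asserts $|s+a|<1$. With $a_l$ generic, the nearest singularity may be closer than $1$, so the identity should be read as holding near $t=0$ and then extended to the full disc where analytic. Since the residue computations only need the coefficients at $t=0$, I would present the result as an identity of Taylor coefficients (equivalently, on a sufficiently small disc). For the argument, I would note that the term-wise expansion is valid for $|s+a|<\min|n+a_l-a|$.
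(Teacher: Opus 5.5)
Your proposal is the paper's argument: write $n_l+a_l+s=(n_l+a_l-a)+(s+a)$, expand each factor by the binomial series, collect by $m=|\overrightarrow{\bfm}_{1,r}|$, and recognize the inner sums. The paper interchanges the sums with no justification, and states the disc as $|s+a|<1$. So your truncation/uniform-convergence/Weierstrass argument and your observation that the expansion is only guaranteed for $|s+a|<\min_{l,\,n\ge1}|n+a_l-a|$ (which can be smaller than $1$) are genuine improvements. One aside should be dropped: terms with some $m_l\ge1$ need not be absolutely summable, because absolute convergence requires $k_r+m_r\ge2$. For example, $\overrightarrow{\bfk}_{1,2}=(1,1)$, $\overrightarrow{\bfm}_{1,2}=(1,0)$, $x_2=-1$ gives a term that is only conditionally convergent. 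Your Weierstrass argument never uses that claim, so the proof stands without it.
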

\begin{proof}
By directly computing the transformation of the denominator $n_j + s + a_j$ in the definition \eqref{defn-CMHZV} of the cyclotomic multiple Hurwitz zeta function into $n_j + a_j - a + s + a$, we can deduce that
\begin{align*}
&\Li_{k_1,\ldots,k_r}(x_1,\ldots,x_r;1+a_1+s,\ldots,1+a_r+s)\\
&=\sum_{0<n_1<\cdots<n_r} \frac{x_1^{n_1}\cdots x_r^{n_r}}{(n_1+a_1-a+s+a)^{k_1}\cdots (n_r+a_r-a+s+a)^{k_r}}\\
&=\sum_{0<n_1<\cdots<n_r} \frac{x_1^{n_1}\cdots x_r^{n_r}}{(n_1+a_1-a)^{k_1}\cdots (n_r+a_r-a)^{k_r}}\frac{1}{\left(1+\frac{s+a}{n_1+a_1-a}\right)^{k_1}\cdots \left(1+\frac{s+a}{n_r+a_r-a}\right)^{k_r}}\\
&=\sum_{0<n_1<\cdots<n_r} \frac{x_1^{n_1}\cdots x_r^{n_r}}{(n_1+a_1-a)^{k_1}\cdots (n_r+a_r-a)^{k_r}}\prod_{l=1}^r \left(\sum_{m_l=0}^\infty \binom{m_l+k_l-1}{k_l-1} \left(-\frac{s+a}{n_l+a_l-a}\right)^{m_l}\right)\\
&=\sum_{m_1,\ldots,m_r=0}^\infty \left(\prod_{l=1}^r (-1)^{m_l}\binom{m_l+k_l-1}{k_l-1}\right) \sum_{0<n_1<\cdots<n_r} \frac{x_1^{n_1}\cdots x_r^{n_r}(s+a)^{m_1+\cdots+m_r}}{(n_1+a_1-a)^{k_1+m_1}\cdots (n_r+a_r-a)^{k_r+m_r}}\\
&=\sum_{m=0}^\infty \sum_{|\overrightarrow{\bfm}_{1,r}|=m,\atop \forall\ m_l\geq 0}  B_m\left(\overrightarrow{\bfk}_{1,r};\overrightarrow{\bfm}_{1,r}\right)\Li_{\overrightarrow{(\bfk+\bfm)}_{1,r}}\left(\overrightarrow{\bfx}_{1,r};\overrightarrow{(1+\bfa-a)}_{1,r}\right)(s+a)^m.
\end{align*}
This completes the proof of Lemma \ref{lem-Talyor-expansion-CMHZ-single-a}.
\end{proof}

\begin{lem}\label{lem-Talyor-expansion-CMHZSs} Let $n\in \N$. For $(k_1,\ldots,k_r)\in\N^r$ and roots of unity $x_1,\ldots,x_r$ with $(k_r,x_r)\neq(1,1)$, if $|s+n|<1$, we have
\begin{align}\label{Talyor-expansion-mhpolyf}
&\Li_{k_1,\ldots,k_r}(x_1,\ldots,x_r;1+a_1+s,\ldots,1+a_r+s)\nonumber\\
&=\sum_{j=0}^r\sum_{m=0}^\infty \sum_{|\overrightarrow{\bfm}_{1,r}|=m,\atop \forall\ m_l\geq 0}  (-1)^{|\overrightarrow{\bfk}_{1,j}|+|\overrightarrow{\bfm}_{1,j}|}B_m\left(\overrightarrow{\bfk}_{1,r};\overrightarrow{\bfm}_{1,r}\right)(x_1\cdots x_r)^n \frac{\Li_{\overrightarrow{(\bfk+\bfm)}_{j+1,r}}\left(\overrightarrow{\bfx}_{j+1,r};\overrightarrow{\bfa}_{j+1,r}\right)}{x_{j+1}\cdots x_r}\nonumber\\
&\qquad\qquad\qquad\qquad\qquad\times \zeta_{n-1}\left(\overleftarrow{(\bfk+\bfm)}_{1,j};\overleftarrow{\bfx}_{1,j}^{-1};-\overleftarrow{\bfa}_{1,j}\right)(s+n)^m,
\end{align}
where $a_1,\ldots,a_r\in \mathbb{C}\setminus \Z$ and $-\overleftarrow{\bfa}_{1,j}:=(-a_j,-a_{j-1},\ldots,-a_1)$.
\end{lem}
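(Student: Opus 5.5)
The plan is a direct series manipulation, as in the proof of Lemma \ref{lem-Talyor-expansion-CMHZ-single-a}, but with the sum first split according to where the summation indices sit relative to $n$. The key observation is that
\[
n_l+a_l+s=(n_l-n)+a_l+(s+n).
\]
So I will recentre every denominator at $s=-n$ and split the range $0<n_1<\cdots<n_r$ into the $r+1$ disjoint pieces
\[
n_1<\cdots<n_j\le n-1<n\le n_{j+1}<\cdots<n_r,\qquad j=0,1,\ldots,r.
\]
On the $j$-th piece the indices $n_1,\ldots,n_j$ produce the truncated (reversed) Hurwitz sum $\zeta_{n-1}$. The indices $n_{j+1},\ldots,n_r$ produce the tail, which will be a cyclotomic multiple Hurwitz zeta value.

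For the indices $l\le j$ I substitute $n_l=n-p_l$, so that $1\le p_j<p_{j-1}<\cdots<p_1\le n-1$. Then $x_l^{n_l}=x_l^n\,(x_l^{-1})^{p_l}$ and
\[
n_l+a_l+s=-(p_l-a_l)\Bigl(1-\tfrac{s+n}{p_l-a_l}\Bigr).
\]
Expanding $(1-u)^{-k_l}=\sum_{m_l\ge0}\binom{m_l+k_l-1}{k_l-1}u^{m_l}$ gives the factor $(-1)^{k_l}\binom{m_l+k_l-1}{k_l-1}(p_l-a_l)^{-k_l-m_l}(s+n)^{m_l}$. Compared with the sign $(-1)^{m_l}$ built into $B_m$, this accounts for the prefactor $(-1)^{|\overrightarrow{\bfk}_{1,j}|+|\overrightarrow{\bfm}_{1,j}|}$. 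After reversing the order of the $p_l$, the resulting finite sum is exactly $\zeta_{n-1}\bigl(\overleftarrow{(\bfk+\bfm)}_{1,j};\overleftarrow{\bfx}^{-1}_{1,j};-\overleftarrow{\bfa}_{1,j}\bigr)$.

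For the indices $l>j$ I substitute $n_l=n-1+N_l$, so that $1\le N_{j+1}<\cdots<N_r$. Then $\prod_{l>j}x_l^{n_l}=(x_{j+1}\cdots x_r)^{n}(x_{j+1}\cdots x_r)^{-1}\prod_{l>j}x_l^{N_l}$ and
\[
n_l+a_l+s=(N_l+a_l-1)\Bigl(1+\tfrac{s+n}{N_l+a_l-1}\Bigr).
\]
Here the expansion produces the alternating binomial series, i.e. the factor $(-1)^{m_l}\binom{m_l+k_l-1}{k_l-1}$ from $B_m$. Summing over the $N_l$ gives $\Li_{\overrightarrow{(\bfk+\bfm)}_{j+1,r}}\bigl(\overrightarrow{\bfx}_{j+1,r};\overrightarrow{\bfa}_{j+1,r}\bigr)$ in the convention \eqref{defn-CMHZV}, since $(N+a-1)$ corresponds to parameter $a$. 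Multiplying the two parts, the powers of $x$ combine to $(x_1\cdots x_r)^n/(x_{j+1}\cdots x_r)$. Collecting by $m=|\overrightarrow{\bfm}_{1,r}|$ and summing over $j$ then yields \eqref{Talyor-expansion-mhpolyf}. No denominator vanishes because every $a_l\notin\Z$, which is why no negative powers of $(s+n)$ appear, in contrast with Lemma \ref{lem-Laurent-expansion-mhpolyf}.

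The main obstacle is justifying the interchange of the infinite sums over $N_l$ with the binomial expansions, together with the domain of validity. Absolute convergence fails for the $m=0$ term when $k_r=1$, since then only conditional convergence (via $x_r\neq1$) is available. I would handle this as follows. The terms with $m_r\ge1$ are absolutely and locally uniformly convergent, so they may be rearranged freely. The $m_r=0$ part in the last variable is just the conditionally convergent value itself, which is analytic in the other shifts, so Abel summation in $N_r$ keeps everything uniform.

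For the radius, the binomial series converge for $|s+n|<\min_l\min(|p_l-a_l|,|N_l+a_l-1|)$. Both sides of \eqref{Talyor-expansion-mhpolyf} are analytic in $s$ on the disc $|s+n|<1$ wherever the left side is holomorphic. I would therefore establish the identity on a small disc and extend it by the identity theorem.
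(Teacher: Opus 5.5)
Your series computation is correct and is essentially the paper's proof. The paper expands each factor binomially in $s+n$, shifts all indices by $n-1$, and splits the shifted range at $0$. That is exactly your split $n_j\le n-1<n\le n_{j+1}$ followed by your substitutions $n_l=n-p_l$ and $n_l=n-1+N_l$, and your bookkeeping of signs and powers of $x$ agrees with it.

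\textbf{The failing step.} Your last step, extending the identity to $|s+n|<1$ by the identity theorem, fails. The right-hand side is a power series in $s+n$. It converges, and is analytic, only on a disc whose radius cannot exceed the distance from $-n$ to the nearest pole of the left-hand side. The identity theorem cannot enlarge a radius of convergence. So the claim that the right side is analytic on $|s+n|<1$ ``wherever the left side is holomorphic'' is not true.

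\textbf{The stated range is false.} In fact the range $|s+n|<1$ in the statement is false in general. Take $r=1$ and $a_1=\tfrac12$. The summand with $n_1=n$ is $x_1^n(s+n+\tfrac12)^{-k_1}$, and all other summands are holomorphic near $s=-n-\tfrac12$. So the left side has a genuine pole at distance $\tfrac12$ from $-n$, and no power series in $s+n$ can represent it on $|s+n|<1$.

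\textbf{What is actually proved.} Your computation, like the paper's formal one, proves the expansion on $|s+n|<\rho$. Here $\rho$ is your own minimum of $|p-a_l|$ over $1\le p\le n-1$ and $|N+a_l-1|$ over $N\ge1$, i.e.\ the distance from $-n$ to the nearest point $-N-a_l$. You should state the lemma on that disc. This costs nothing downstream. In the proof of Theorem \ref{mainthm-CMHZVs}, $s=-n$ is a simple pole of $\Phi$ and the other factors are holomorphic there, so only the $m=0$ coefficient is ever used.

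\textbf{The case $k_r=1$.} There is a cleaner justification than rearranging conditionally convergent sums. Apply your exact finite computation to the truncation $n_r\le M$, which is a rational function of $s$ holomorphic on $|s+n|<\rho$. Then let $M\to\infty$ coefficient by coefficient. The truncations converge locally uniformly there (by Abel summation, using $x_r\neq1$), so Weierstrass' theorem gives convergence of the Taylor coefficients to the stated ones.
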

\begin{proof}
A straightforward calculation yields
\begin{align*}
&\Li_{k_1,\ldots,k_r}(x_1,\ldots,x_r;1+a_1+s,\ldots,1+a_r+s)\\
&=\sum_{0<n_1<\cdots<n_r} \frac{x_1^{n_1}\cdots x_r^{n_r}}{(n_1+a_1-n+s+n)^{k_1}\cdots (n_r+a_r-n+s+n)^{k_r}}\\
&=\sum_{0<n_1<\cdots<n_r} \frac{x_1^{n_1}\cdots x_r^{n_r}}{(n_1+a_1-n)^{k_1}\cdots (n_r+a_r-n)^{k_r}}\frac{1}{\left(1+\frac{s+n}{n_1+a_1-n}\right)^{k_1}\cdots \left(1+\frac{s+n}{n_r+a_r-n}\right)^{k_r}}\\
&=\sum_{m_1,\ldots,m_r=0}^\infty \left(\prod_{l=1}^r (-1)^{m_l}\binom{m_l+k_l-1}{k_l-1}\right)\sum_{0<n_1<\cdots<n_r} \frac{x_1^{n_1}\cdots x_r^{n_r}(s+n)^{m_1+\cdots+m_r}}{(n_1+a_1-n)^{k_1+m_1}\cdots (n_r+a_r-n)^{k_r+m_r}}\\
&=\sum_{m=0}^\infty \sum_{|\overrightarrow{\bfm}_{1,r}|=m,\atop \forall\ m_l\geq 0}  B_m\left(\overrightarrow{\bfk}_{1,r};\overrightarrow{\bfm}_{1,r}\right)\sum_{-(n-1)<n_1<\cdots<n_r} \frac{x_1^{n_1+n-1}\cdots x_r^{n_r+n-1}(s+n)^m}{(n_1+a_1-1)^{k_1+m_1}\cdots (n_r+a_r-1)^{k_r+m_r}}.
\end{align*}
On the other hand, by truncating the rightmost summation $-(n-1)<n_1<\cdots<n_r$ at zero, we obtain
\begin{align*}
&\sum_{-(n-1)<n_1<\cdots<n_r} \frac{x_1^{n_1}\cdots x_r^{n_r}}{(n_1+a_1-1)^{k_1}\cdots (n_r+a_r-1)^{k_r}}\\
&=\sum_{j=0}^r \sum_{-(n-1)<n_1<\cdots<n_j\leq 0<n_{j+1}<\cdots<n_r} \frac{x_1^{n_1}\cdots x_r^{n_r}}{(n_1+a_1-1)^{k_1}\cdots (n_r+a_r-1)^{k_r}}\\
&=\sum_{j=0}^r \sum_{-(n-1)<n_1<\cdots<n_j\leq 0} \frac{x_1^{n_1}\cdots x_j^{n_j}}{(n_1+a_1-1)^{k_1}\cdots (n_j+a_j-1)^{k_j}}\\&\qquad\qquad\times\sum_{0<n_{j+1}<\cdots<n_r} \frac{x_{j+1}^{n_{j+1}}\cdots x_r^{n_r}}{(n_{j+1}+a_{j+1}-1)^{k_{j+1}}\cdots (n_r+a_r-1)^{k_r}}\\
&=\sum_{j=0}^r \sum_{n-1>n_1>\cdots>n_j\geq 0} \frac{x_1^{-n_1}\cdots x_j^{-n_j}}{(-n_1+a_1-1)^{k_1}\cdots (-n_j+a_j-1)^{k_j}}\Li_{\overrightarrow{\bfk}_{j+1,r}}(\overrightarrow{\bfx}_{j+1,r};\overrightarrow{\bfa}_{j+1,r})\\
&=\sum_{j=0}^r (-1)^{|\overrightarrow{\bfk}_{1,j}|}\sum_{0<n_j<\cdots<n_1<n} \frac{x_1^{1-n_1}\cdots x_j^{1-n_j}}{(n_1-a_1)^{k_1}\cdots (n_j-a_j)^{k_j}}\Li_{\overrightarrow{\bfk}_{j+1,r}}(\overrightarrow{\bfx}_{j+1,r};\overrightarrow{\bfa}_{j+1,r})\\
&=\sum_{j=0}^r (-1)^{|\overrightarrow{\bfk}_{1,j}|} (x_1\cdots x_j) \zeta_{n-1}\left(\overleftarrow{\bfk}_{1,j};\overleftarrow{\bfx}^{-1}_{1,j};-\overleftarrow{\bfa}_{1,j}\right)\Li_{\overrightarrow{\bfk}_{j+1,r}}(\overrightarrow{\bfx}_{j+1,r};\overrightarrow{\bfa}_{j+1,r}).
\end{align*}
Therefore, combining the two equations above yields the desired result.
\end{proof}

\begin{lem}\label{lem-Laurent-expansion-CMHZSs} Let $n\in\N$ and $a_1,\ldots,a_r\in \mathbb{C}\setminus \Z$ satisfy $a_j-a_i\notin \N^-$ for any $i< j\ (i,j\in \{1,2,\ldots,r\})$. For $(k_1,\ldots,k_r)\in\N^r$ and roots of unity $x_1,\ldots,x_r$ with $(k_r,x_r)\neq(1,1)$, if $|s+n+a_j|<1$, we have
\begin{align}\label{Laurent-expansion-Cmhpolyf}
&\Li_{k_1,\ldots,k_r}(x_1,\ldots,x_r;1+a_1+s,\ldots,1+a_r+s)\nonumber\\
&=\sum_{m=0}^{k_{j}-1} \sum_{|\bfn_r|-n_j=m,\atop \forall n_l\geq 0} \widetilde{A}_j\left(\overrightarrow{\bfk}_{1,r};\overrightarrow{\bfn}_{1,r}\right) (x_1\cdots x_r)^n
\zeta_{n-1}\left(\overleftarrow{(\bfk+\bfn)}_{1,j-1};\overleftarrow{\bfx}_{1,j-1}^{-1};\overleftarrow{(a_j-\bfa)}_{1,j-1}\right)\nonumber\\&\qquad\qquad\qquad\qquad\times
\Li_{\overrightarrow{(\bfk+\bfn)}_{j+1,r}}\left(\overrightarrow{\bfx}_{j+1,r};\overrightarrow{(1-a_j+\bfa)}_{j+1,r}\right)(s+n+a_j)^{m-k_j}\nonumber\\&\quad+\cdots,
\end{align}
where  ``$\cdots$" indicates the omission of constant terms and higher-order infinitesimals of $(s+n+a_j)$ as they are not needed in the subsequent residue calculation.
\end{lem}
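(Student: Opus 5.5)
I will follow the same route as in the proof of Lemma \ref{lem-Talyor-expansion-CMHZSs}, but now re-centred at $s=-n-a_j$ instead of $s=-n$. Put $t:=s+n+a_j$. The denominator $n_l+a_l+s$ in \eqref{defn-CMHZV} becomes $n_l-n+a_l-a_j+t$.

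I shift the summation indices by $n_l\mapsto n_l+n-1$. This rewrites
\[
\Li_{k_1,\ldots,k_r}(x_1,\ldots,x_r;1+a_1+s,\ldots,1+a_r+s)
\]
as $(x_1\cdots x_r)^{n-1}$ times a sum over $-(n-1)<n_1<\cdots<n_r$ of
\[
\frac{x_1^{n_1}\cdots x_r^{n_r}}{(n_1+a_1-a_j-1+t)^{k_1}\cdots (n_r+a_r-a_j-1+t)^{k_r}}.
\]
The factor $(n_j+a_j-a_j-1+t)^{k_j}$ is singular at $t=0$ exactly when $n_j=1$, and it contributes $t^{-k_j}$. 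Every other factor $n_l+a_l-a_j-1$ is nonzero under the hypotheses, for the following reasons.
\begin{itemize}
\item For $l>j$ we have $n_l\ge 2$, so $n_l+a_l-a_j-1=0$ would force $a_l-a_j\in\N^-$. This is excluded since $a_j-a_i\notin\N^-$ for $i<j$ (applied with the roles $i=j$, $j=l$).
\item For $l<j$ we have $n_l\le 0$, so vanishing would give $a_j-a_l\in\N_0$. Here I expect to need $a_j-a_l\notin\N_0$ as well. Otherwise an extra coincidence raises the pole order, and these cases are presumably treated by continuity or excluded implicitly.
\end{itemize}
I would flag this point and, if needed, appeal to genericity plus analytic continuation in the $a$'s.

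So the singular part comes only from the terms with $n_j=1$. Fixing $n_j=1$, the sum splits into two independent pieces.
\begin{itemize}
\item \emph{Left piece:} the indices $-(n-1)<n_1<\cdots<n_{j-1}\le 0$. Substituting $n_l\mapsto -n_l$ turns this into $\zeta_{n-1}$ in reversed order, as in the truncation step of Lemma \ref{lem-Talyor-expansion-CMHZSs}. It has denominators $(n_l-a_l+a_j)$ up to sign and shifts, i.e.\ arguments $\overleftarrow{(a_j-\bfa)}_{1,j-1}$, and produces the sign $(-1)^{|\overrightarrow{\bfk}_{1,j-1}|}$ together with powers of $x_l^{-1}$.
\item \emph{Right piece:} the indices $1<n_{j+1}<\cdots<n_r$. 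Shifting down by one gives $\Li_{\overrightarrow{\bfk}_{j+1,r}}(\overrightarrow{\bfx}_{j+1,r};\overrightarrow{(1-a_j+\bfa)}_{j+1,r})$ and contributes a factor $x_{j+1}\cdots x_r$.
\end{itemize}
I then collect the powers of the $x$'s. The shift contributes $(x_1\cdots x_r)^{n-1}$, the index $n_j=1$ contributes $x_j$, the left piece contributes $x_1\cdots x_{j-1}$ from $x_l^{1-n_l}$, and the right piece contributes $x_{j+1}\cdots x_r$. Together these should give exactly $(x_1\cdots x_r)^n$. Verifying this bookkeeping is where I expect the main care to be needed.

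Finally, in each non-singular factor $(N_l+t)^{-k_l}$ I expand binomially, $\sum_{n_l\ge0}\binom{n_l+k_l-1}{k_l-1}(-t/N_l)^{n_l}$, exactly as in Lemma \ref{lem-Talyor-expansion-CMHZ-single-a}. The sign of $N_l$ differs between the two pieces, so the signs work out as follows.
\begin{itemize}
\item For $l<j$ the denominators are negative, and the powers $(-1)^{n_l}$ cancel against them. Only $(-1)^{k_l}$ survives, and it is already absorbed into $(-1)^{|\overrightarrow{\bfk}_{1,j-1}|}$.
\item For $l>j$ the sign $(-1)^{n_l}$ remains, giving $(-1)^{|\overrightarrow{\bfn}_{j+1,r}|}$.
\end{itemize}
This reproduces $\widetilde{A}_j$. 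Multiplying by $t^{-k_j}$ and keeping only the total orders $m=|\bfn|-n_j\le k_j-1$ gives the negative powers $t^{m-k_j}$. All remaining terms are constant or of positive order in $t$, and these are the terms absorbed into "$\cdots$". Absolute convergence of the rearrangements near $t=0$ follows as in the earlier lemmas, since all $x_l$ are roots of unity and $(k_r,x_r)\neq(1,1)$.
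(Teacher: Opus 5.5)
Your route is the paper's own. The paper only sketches it: isolate the terms with $n_j=n$, then expand the remaining denominators $n_i+a_i-a_j-n+t$ binomially. Your bookkeeping of the powers of $x_l$, the signs and the binomial coefficients correctly yields $\widetilde{A}_j$, the reversed $\zeta_{n-1}$ with parameters $\overleftarrow{(a_j-\bfa)}_{1,j-1}$, and the tail $\Li$ with parameters $\overrightarrow{(1-a_j+\bfa)}_{j+1,r}$.

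Your second bullet, however, contains a sign slip. For $l<j$ and $n_l\le 0$, the equation $n_l+a_l-a_j-1=0$ means $a_l-a_j=1-n_l\in\N$, i.e.\ $a_j-a_l\in\N^-$. That is exactly what the hypothesis excludes for the pair $i=l<j$. So the stated hypothesis covers both sides: the pair $(j,l)$ when $l>j$, and the pair $(l,j)$ when $l<j$. No extra condition, genericity or analytic continuation is needed. Continuation in the $a$'s could not have repaired a genuine collision anyway. The coefficient $\zeta_{n-1}(\cdots;\overleftarrow{(a_j-\bfa)}_{1,j-1})$ would then contain a vanishing denominator $m_l+a_j-a_l$ with $1\le m_l\le n-1$, and the pole order would really increase.

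The actual gap is the inference ``So the singular part comes only from the terms with $n_j=1$'': your bullets only examine terms with $n_j=1$. In the original indexing, a term with $n_j\neq n$ is also singular at $s=-n-a_j$ whenever $n_l=n+a_j-a_l$ for some $l\neq j$. The hypothesis permits this as soon as some $a_l-a_j\in\Z$, for example $a_1=\cdots=a_r$, which is the single-parameter case behind Rui's conjecture.

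Here is a concrete instance. Take $r=2$, $k_1=k_2=1$, $x_2\neq 1$, $a_1=a_2$, $j=1$, $n=2$, and $t=s+2+a_1$. The term $(n_1,n_2)=(1,2)$ equals $x_1x_2^2/(t(t-1))$ and contributes $-x_1x_2^2t^{-1}$, which the $j=1$ formula does not contain.

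What the hypothesis does guarantee, and what your bullets really prove, is that no single term has two singular factors. If factors $i<l$ both vanished at the same point, then $a_l-a_i=n_i-n_l\in\N^-$. Consequently:
\begin{itemize}
\item the displayed expression is exactly the principal part contributed by the terms with $n_j=n$;
\item the full principal part at $s=-n-a_j$ is the sum of such contributions over all pairs $(l,n')$ with $n'+a_l=n+a_j$.
\end{itemize}
This partial reading is what the proof of Theorem~\ref{mainthm-CMHZVs} actually needs when it sums the residues over $j$. The lemma as literally stated holds under the extra condition $a_l-a_j\notin\Z$ for $l\neq j$. The paper's sketch makes the same tacit assumption, so you should either add that condition or state the lemma in the partial form.

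A minor point: when $k_r=1$ the series converges only conditionally. The termwise expansion in $t$ should therefore be justified by locally uniform convergence and Weierstrass's theorem, not by absolute convergence.
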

\begin{proof}
The proof of this lemma proceeds similarly to the one above. It involves truncating the summation $0 < n_1 < \dots < n_r$ at $n$, and then expanding the denominator $(n_i + a_i + s)$ as $(n_i + a_i - a_j - n + s + n + a_j)\ (i\neq j)$ into a power series. Through a somewhat lengthy calculation, the desired result is obtained. The details are left to the reader.
\end{proof}

Finally, to conclude this section, we provide the expansion of the $\Phi(s;x)$ function around an arbitrary non-integer complex point, which will also be utilized in our subsequent residue calculations.
\begin{lem}\label{lem-xu-yu-one}
Let $x$ be a root of unity. For $n\in\N_0$ and $a\in \mathbb{C}\setminus \Z$, if $|s+n+a|<1$, then
		\begin{align}\label{equ-extend-xuzhou-two}
			\Phi(s;x)=x^n\sum_{m=0}^\infty \left((-1)^m \Li_{m+1}(x;1-a)-x \Li_{m+1}\Big(x^{-1};a\Big) \right)(s+n+a)^m.
		\end{align}
\end{lem}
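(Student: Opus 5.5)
The plan is to expand $\Phi(s;x)$ directly from its definition \eqref{etrif}, writing $s=-n-a+t$ with $t:=s+n+a$ and $|t|<1$. Since $a\notin\Z$, the point $-n-a$ is not a pole of $\phi(s;x)$ or of $\phi(-s;x^{-1})$, and it is not $0$. So $\Phi$ is holomorphic in the disc $|t|<1$, provided the disc avoids the integers; in general I would first establish the identity for small $|t|$ and then extend it by analytic continuation. The task therefore reduces to Taylor-expanding the three pieces $\phi(s;x)$, $-\phi(-s;x^{-1})$ and $-1/s$ at $s=-n-a$.

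First piece. We have $\phi(s;x)=\sum_{k\ge0}x^k/(k-n-a+t)$. Substituting $k=n+\ell$ gives
\[\phi(s;x)=x^n\sum_{\ell\ge -n}\frac{x^{\ell}}{\ell-a+t}.\]
The terms with $\ell\ge1$ are $\sum_{\ell\ge1}x^\ell/(\ell-a+t)$; expanding this geometrically in $t$ gives $\sum_m(-1)^m\Li_{m+1}(x;1-a)t^m$. Termwise expansion is justified for $|t|$ small, using that $x\neq1$ or $m\ge1$ for convergence (the case $x=1$ needs the pieces to be combined before expanding, see below). What remains are the finitely many terms $\ell=-n,\dots,0$, namely $x^n\sum_{\ell=0}^{n}x^{-\ell}/(-\ell-a+t)$.

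Second piece. We have $-\phi(-s;x^{-1})=-\sum_{k\ge0}x^{-k}/(k+n+a-t)$. Substituting $k=\ell-n-1$ with $\ell\ge n+1$ gives
\[-\phi(-s;x^{-1})=-x^{n+1}\sum_{\ell\ge n+1}\frac{x^{-\ell}}{\ell-1+a-t}.\]
Completing this to the full sum over $\ell\ge1$ produces the term $-x\cdot x^n\sum_{m}\Li_{m+1}(x^{-1};a)t^m$, because $1/(\ell-1+a-t)$ expands with positive signs $t^m$. It also leaves the correction $+x^{n+1}\sum_{\ell=1}^{n}x^{-\ell}/(\ell-1+a-t)$.

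Cancellation of the finite terms. I expect the only real bookkeeping to be checking that the two finite remainders cancel against each other and against $-1/s=1/(n+a-t)$. Reindex the second remainder by $\ell'=\ell-1$, so it becomes $x^{n}\sum_{\ell'=0}^{n-1}x^{-\ell'}/(\ell'+a-t)$. The first remainder is $-x^n\sum_{\ell=0}^{n}x^{-\ell}/(\ell+a-t)$. Their sum is $-x^n x^{-n}/(n+a-t)=-1/(n+a-t)$, which is exactly cancelled by $-1/s$. This shows that the kernel's $-1/s$ term is designed so that no stray rational terms survive.

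Convergence and the case $x=1$. The main obstacle is rigour when $x=1$, where $\Li_1(1;\cdot)$ diverges. To handle it, I would pair the two tails termwise before expanding, exactly as in the convergent definition of $\Phi$ for roots of unity. The $m\ge1$ coefficients converge absolutely. For $m=0$, the combination $\Li_1(x;1-a)-x\Li_1(x^{-1};a)$ is interpreted as the convergent symmetric sum, consistent with how Lemma \ref{lem-rui-xu-one} is used. Alternatively, one can differentiate the identity for $m\ge1$, which is uniformly convergent, and match constants. Collecting the terms gives \eqref{equ-extend-xuzhou-two}.
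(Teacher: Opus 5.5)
Your argument is correct and is essentially the paper's proof. Both expand $\Phi(s;x)$ directly from \eqref{etrif} about $s=-n-a$, shift summation indices, and let the finitely many leftover terms cancel against $-1/s$. You cancel those terms as rational functions of $t$ before expanding, and you treat $x=1$ more carefully; the paper instead expands first and cancels coefficientwise.

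One correction: your proposed extension to the whole disc $|s+n+a|<1$ ``by analytic continuation'' cannot work. $\Phi(s;x)$ has a simple pole with residue $x^{-k}\neq0$ at every integer $k$, so the series has radius of convergence exactly $\min_{k\in\Z}|a-k|$ (e.g.\ $1/2$ for $a=1/2$). The identity therefore holds only for $|s+n+a|<\min_{k\in\Z}|a-k|$, in your proof and in the paper's alike, and that is all the residue computations require.
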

\begin{proof}
The proof of this lemma also follows from a straightforward calculation together with the definition of $\Phi(s;x)$, as demonstrated below.
\begin{align*}
\Phi(s;x)&=\sum_{k=0}^\infty \left\{\frac{x^k}{k-n-a+s+n+a}-\frac{x^{-k}}{k+n+a-s-n-a}\right\}-\frac1{s+n+a-n-a}\\
&=\sum_{k=0}^\infty \left\{\frac{x^k}{k-n-a}\frac1{1+\frac{s+n+a}{k-n-a}}-\frac{x^{-k}}{k+n+a}\frac1{1-\frac{s+n+a}{k+n+a}}\right\}+\frac{1}{n+a}\frac{1}{1-\frac{s+n+a}{n+a}}\\
&=\sum_{k=0}^\infty \left\{\frac{x^k}{k-n-a}\sum_{m=0}^\infty \left(-\frac{s+n+a}{k-n-a}\right)^m-\frac{x^{-k}}{k+n+a}\sum_{m=0}^\infty \left(\frac{s+n+a}{k+n+a}\right)^m \right\}\\
&\quad+\frac1{n+a}\sum_{m=0}^\infty \left(\frac{s+n+a}{n+a}\right)^m\\
&=\sum_{m=0}^\infty (s+n+a)^m \left\{(-1)^m\sum_{k=0}^\infty \frac{x^k}{(k-n-a)^{m+1}}-\sum_{k=0}^\infty \frac{x^{-k}}{(k+n+a)^{m+1}} +\frac{1}{(n+a)^{m+1}}\right\}\\
&=\sum_{m=0}^\infty (s+n+a)^m \left\{ (-1)^m\sum_{k=0}^n \frac{x^k}{(k-n-a)^{m+1}}+(-1)^m\sum_{k=n+1}^\infty \frac{x^k}{(k-n-a)^{m+1}}\atop -\sum_{k=1}^\infty \frac{x^{-k}}{(k+n+a)^{m+1}} \right\}\\
&=\sum_{m=0}^\infty (s+n+a)^m \left\{(-1)^m \sum_{j=0}^n \frac{x^{n-j}}{(-j-a)^{m+1}}+(-1)^m\sum_{j=1}^\infty \frac{x^{j+n}}{(j-a)^{m+1}}\atop -\sum_{j=n+1}^\infty \frac{x^{n-j}}{(j+a)^{m+1}} \right\}\\
&=\sum_{m=0}^\infty (s+n+a)^m \left\{(-1)^m \sum_{j=1}^\infty \frac{x^{j+n}}{(j-a)^{m+1}}-\sum_{j=0}^\infty \frac{x^{n-j}}{(j+a)^{m+1}}\right\}.
\end{align*}
Building upon the definition of the cyclotomic single Hurwitz zeta values, the desired result can then be obtained.
\end{proof}

In Sections \ref{sec-proof-one} and \ref{sec-proof-two}, we will apply the expansions obtained above to calculate the residues of the contour integrals \eqref{contour-integral-one} and \eqref{contour-integral-two}, thereby providing detailed proofs of Theorems \ref{mainthm-CMZVs} and \ref{mainthm-CMHZVs}, respectively.

\section{Symmetry Results of Cyclotomic Multiple Zeta Values}\label{sec-proof-one}

In this section, we will prove Theorem \ref{mainthm-CMZVs} by considering the contour integral \eqref{contour-integral-one}. Let
\begin{align*}
F_q(s):=\frac{\Phi(s;x)\Li_{k_1,\ldots,k_r}(x_1,\ldots,x_r;s+1)}{s^q}.
\end{align*}
{\bf Proof of Theorem \ref{mainthm-CMZVs}.}
Clearly, the only singularities of the function $F_q(s)$ are poles at the integers. At a positive integer $n\in \N$, the pole $s=n$ is simple and by the expansion \eqref{LEPhi-function}, the residue is
\begin{align}\label{equ-proof-thm-one-residu-1}
{\rm Res}[F_q(s),n]&=\lim_{s\rightarrow n} (s-n) \frac{\Phi(s;x)\Li_{k_1,\ldots,k_r}(x_1,\ldots,x_r;s+1)}{s^q} \nonumber\\
&=\frac{x^{-n}}{n^q}\sum_{0<n_1<\cdots<n_r} \frac{x_1^{n_1}\cdots x_r^{n_r}}{(n+n_1)^{k_1}\cdots (n+n_r)^{k_r}}\nonumber\\
&=\frac{(xx_1\cdots x_r)^{-n}}{n^q} \sum_{n<n_1<\cdots<n_r} \frac{x_1^{n_1}\cdots x_r^{n_r}}{n_1^{k_1}\cdots n_r^{k_r}}.
\end{align}
Clearly, summing both sides of \eqref{equ-proof-thm-one-residu-1} over $n$ from 1 to infinity yields
\begin{align*}
\sum_{n=1}^\infty {\rm Res}[F_q(s),n]=\Li_{\overrightarrow{\bfk}_{0,r}}\left(\widetilde{\bfx^{-1}_{0,r}},\overrightarrow{\bfx}_{1,r}\right)\quad (k_0=q,\ x_0=x).
\end{align*}
The pole $s=0$ has order $q+1$. Using \eqref{LEPhi-function} and \eqref{Taylor-expansion-mhpolyf-cases}, an elementary calculations shows that the residue is
\begin{align}
&{\rm Res}[F_q(s),0]=\frac1{q!}\lim_{s\rightarrow 0} \frac{d^q}{ds^q}\left\{s^{q+1}\frac{\Phi(s;x)\Li_{k_1,\ldots,k_r}(x_1,\ldots,x_r;s+1)}{s^q}\right\}\nonumber\\
&=-\sum_{j+m=q,\atop j,m\geq 0} \sum_{|\bfm_{1,r}|=m,\atop \forall\ m_l\geq 0} B_m\left(\overrightarrow{\bfk}_{1,r};\overrightarrow{\bfm}_{1,r}\right) \left((-1)^j\Li_j\left(\widetilde{\bfx^{-1}_{0,r}}\right)+\Li_j\left(\widetilde{\bfx_{0,r}}\right)\right)\Li_{\overrightarrow{(\bfk+\bfm)}_{1,r}}\left(\overrightarrow{\bfx}_{1,r}\right).
\end{align}
For a positive integer $n$, the pole at $s=-n$ has order $K+1$, where ($K:=\max\{k_1,k_2,\ldots,k_r\}$). Using \eqref{LEPhi-function} and \eqref{Laurent-expansion-mhpolyf}, and performing a rather lengthy calculation, we find that the residue is
\begin{align}\label{equ-proof-thm-1-residue-3}
&{\rm Res}[F_q(s),-n]=\frac1{K!}\lim_{s\rightarrow -n} \frac{d^K}{ds^K}\left\{(s+n)^{K+1}\frac{\Phi(s;x)\Li_{k_1,\ldots,k_r}(x_1,\ldots,x_r;s+1)}{s^q}\right\}\nonumber\\
&=\sum_{j=0}^r (-1)^{|\overrightarrow{\bfk}_{0,j}|} \Li_{\overrightarrow{\bfk}_{j+1,r}}\left(\overrightarrow{\bfx}_{j+1,r}\right) (\widetilde{\bfx_{0,r}})^n\frac{\zeta_{n-1}\left(\overleftarrow{\bfk}_{1,j};\overleftarrow{\bfx}^{-1}_{1,j}\right)}{n^q}\nonumber\\
&\quad+\sum_{j=1}^r\sum_{m=0}^{k_j}\sum_{|\overrightarrow{\bfm}_{1,r}|-m_j=m,\atop \forall\ m_l\geq 0} C_{q,m,i,j}\left(\overrightarrow{\bfk}_{1,r};\overrightarrow{\bfm}_{1,r}\right) \Li_{\overrightarrow{(\bfk+\bfn)}_{j+1,r}}\left(\overrightarrow{\bfx}_{j+1,r}\right)\nonumber\\&\qquad\qquad\qquad\qquad\qquad\qquad\times(\widetilde{\bfx_{0,r}})^n
\frac{\zeta_{n-1}\left(\overleftarrow{(\bfk+\bfn)}_{1,j-1};\overleftarrow{\bfx}^{-1}_{1,j-1}\right)}{n^{q+k_j-m}} \nonumber\\
&\quad+\sum_{j=1}^r\sum_{0\leq i+m\leq k_j-1,\atop i,m\geq 0} \sum_{|\overrightarrow{\bfm}_{1,r}|-m_j=m,\atop \forall\ m_l\geq 0} D_{q,m,i,j}\left(\overrightarrow{\bfk}_{1,r};\overrightarrow{\bfm}_{1,r}\right)\left((-1)^i\Li_{i+1}\left(\widetilde{\bfx^{-1}_{0,r}}\right)-\Li_{i+1}\left(\widetilde{\bfx_{0,r}}\right)\right)\nonumber\\
&\qquad\qquad\qquad\qquad\times \Li_{\overrightarrow{(\bfk+\bfn)}_{j+1,r}}\left(\overrightarrow{\bfx}_{j+1,r}\right)(\widetilde{\bfx_{0,r}})^n
\frac{\zeta_{n-1}\left(\overleftarrow{(\bfk+\bfn)}_{1,j-1};\overleftarrow{\bfx}^{-1}_{1,j-1}\right)}{n^{q+k_j-m-1-i}},
\end{align}
where in the process of computing the residue, we have used the following formula:
\begin{align}
\lim_{s\rightarrow -n} \frac{d^k}{d s^k} \frac{(s+n)^m}{s^q}=(-1)^qk!\binom{q+k-m-1}{q-1} \frac{1}{n^{q+k-m}}\quad(m\leq k).
\end{align}
It is not difficult to see that, provided that we stipulate that $\Li_0(x) + \Li_0(x^{-1}) = -1$, the expressions in the second and third terms of \eqref{equ-proof-thm-1-residue-3} can be combined into one.

By Lemma \ref{lem-redisue-thm}, we know that
\[\sum_{n=1}^\infty \left({\rm Res}[F_q(s),n]+{\rm Res}[F_q(s),-n]\right)+{\rm Res}[F_q(s),0]=0.\]
Noting the fact that
\begin{align*}
C_{q,m,i,j}\left(\overrightarrow{\bfk}_{1,r};\overrightarrow{\bfm}_{1,r}\right)=D_{q,m,i-1,j}\left(\overrightarrow{\bfk}_{1,r};\overrightarrow{\bfm}_{1,r}\right),
\end{align*}
then combining these three contributions yields the statement of Theorem \ref{mainthm-CMZVs} after replacing $x$ by $\widetilde{\bfx_{0,r}^{-1}}$. \hfill$\square$

Setting $r=1, k_1=k$ and $x_1=y$ in Theorem \ref{mainthm-CMZVs} yields the following corollary, namely the symmetry formula for cyclotomic double zeta values.
\begin{cor}\label{cor-proof-SRCMZV}
Let $x,y$ be roots of unity. For $q,k\in\N$ and $(q,x), (k,y)\neq (1,1)$, we have
\begin{align*}
&\Li_{q,k}(x,y)+(-1)^q\Li_k(y)\Li_q\left(\frac1{x}\right)+(-1)^{q+k}\Li_{k,q}\left(\frac1{y},\frac1{x}\right)\nonumber\\
&-(-1)^q \sum_{l=0}^k \binom{q+k-l-1}{q-1}\left( (-1)^l\Li_l\left(\frac1{xy}\right)+\Li_{l}(xy)\right)\Li_{q+k-l}\left(\frac1{x}\right)\nonumber\\
&-\sum_{l+m=q,\atop l,m\geq 0} (-1)^m \binom{m+k-1}{k-1}\left( (-1)^l\Li_l\left(\frac1{xy}\right)+\Li_{l}(xy)\right)\Li_{k+m}(y)=0,
\end{align*}
where $\Li_0(x)+\Li_0(x^{-1})=-1$.
\end{cor}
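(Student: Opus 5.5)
This is the case $r=1$, $k_1=k$, $x_1=y$ of Theorem \ref{mainthm-CMZVs}. I will substitute these values into \eqref{equ-mainthm-CMZVs} and simplify each of the four groups of terms. I will also check the result against a direct residue computation for $F_q(s)=\Phi(s;x)\Li_k(y;s+1)/s^q$.

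\emph{Leading term and the sum over $j=0,1$.} The leading term is $\Li_{q,k}(x,y)$. The term $j=0$ of the sum gives $(-1)^q\Li_k(y)\Li_q(x^{-1})$. The term $j=1$ gives $(-1)^{q+k}\Li_{k,q}(y^{-1},x^{-1})$, since $\Li_\emptyset=1$. These are the first three terms of the corollary.

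\emph{The $B_m$ sum.} With $r=1$ we have $m_1=m$, so $B_m(k;m)=(-1)^m\binom{m+k-1}{k-1}$. The $B_m$ sum becomes
\[
-\sum_{l+m=q}(-1)^m\binom{m+k-1}{k-1}\big((-1)^l\Li_l((xy)^{-1})+\Li_l(xy)\big)\Li_{k+m}(y),
\]
which is the last line of the corollary.

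\emph{The $C$ sum.} Only $j=1$ occurs. The constraint $|\bfm_{1,1}|-m_1=m$ forces $m=0$, and the empty product of binomials equals $1$. The sign is $(-1)^{|\bfk_{0,0}|}=(-1)^q$. The binomial is $\binom{q+k-i-1}{q-1}$, with $0\le i\le k$. The factor $\Li_{\overrightarrow{(\bfk+\bfm)}_{2,1}}$ is empty, and the remaining factor is $\Li_{q+k-i}(x^{-1})$. Renaming $i$ as $l$ gives the middle line, with coefficient $-(-1)^q$, as required.

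\emph{Main obstacle.} The only real care is to read the convention $\Li_0(x)+\Li_0(x^{-1})=-1$ consistently. It enters at $l=0$ in the middle sum and at $j=0$ in the $B_m$ sum. To make sure the index ranges and the $\Li_0$ convention are right, I will spot-check against the residue data. At $s=0$, the pole of order $q+1$ gives exactly the $B_m$ sum, via Lemma \ref{lem-rui-xu-one} and \eqref{Taylor-expansion-mhpolyf-cases}. At $s=-n$, the pole comes from $\Li_k(y;s+1)$, which has a pole of order $k$ coming from the term $n_1=n$. Combining it with the simple pole of $\Phi$ gives a pole of order $k+1$. Applying the derivative formula for $(s+n)^m/s^q$ then reproduces $\binom{q+k-l-1}{q-1}$, with the $l=0$ term coming from the polar part $x^{n}/(s+n)$ of $\Phi$. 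Finally, summing over $n$ and replacing $x$ by $(xy)^{-1}$ recovers the stated identity.
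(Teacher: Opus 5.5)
Your proposal is correct and is the paper's own argument: the corollary is obtained by setting $r=1$, $k_1=k$, $x_1=y$ in Theorem \ref{mainthm-CMZVs}, and your term-by-term reduction (with $m=0$ forced in the $C$-sum, $\Li_\emptyset=1$, and the $\Li_0$ convention entering at index $0$) reproduces the stated identity. Your residue spot-check retraces the paper's proof of Theorem \ref{mainthm-CMZVs} in the case $r=1$, with the kernel parameter replaced by $(xy)^{-1}$ at the end.
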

By further applying the stuffle relation
\[
\Li_q(x)\Li_k(y) = \Li_{q,k}(x,y) + \Li_{k,q}(y,x) + \Li_{k+q}(xy)
\]
to Corollary \ref{cor-proof-SRCMZV}, we obtain the following parity formula for cyclotomic double Hurwitz zeta values, which is equivalent to Theorem 3.1 in \cite{Rui-Xu2025}.
\begin{cor}\label{cor-proof-PFCMZV}
Let $x,y$ be roots of unity. For $q,k\in\N$ and $(q,x), (k,y)\neq (1,1)$, we have
\begin{align*}
&\Li_{k,q}(y,x)-(-1)^{q+k}\Li_{k,q}\left(\frac1{y},\frac1{x}\right)\nonumber\\
&=\Li_q(x)\Li_k(y)+(-1)^q \Li_k(y)\Li_q\left(\frac1{x}\right)-\Li_{k+q}(xy)\nonumber\\
&\quad-(-1)^q \sum_{l=0}^k \binom{q+k-l-1}{q-1}\left( (-1)^l\Li_l\left(\frac1{xy}\right)+\Li_{l}(xy)\right)\Li_{q+k-l}\left(\frac1{x}\right)\nonumber\\
&\quad-(-1)^q\sum_{l=0}^q \binom{q+k-l-1}{k-1}\left( (-1)^l\Li_l\left(\frac1{xy}\right)+\Li_{l}(xy)\right)\Li_{q+k-l}(y),
\end{align*}
where $\Li_0(x)+\Li_0(x^{-1})=-1$.
\end{cor}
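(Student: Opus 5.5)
The plan is to obtain Corollary \ref{cor-proof-PFCMZV} directly from Corollary \ref{cor-proof-SRCMZV}, eliminating the non-reversed double value with the stuffle product. No new residue computation is needed.

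First, I use the stuffle relation displayed just before the statement,
\[
\Li_q(x)\Li_k(y) = \Li_{q,k}(x,y) + \Li_{k,q}(y,x) + \Li_{k+q}(xy).
\]
It is valid here because $(q,x)\neq(1,1)$ and $(k,y)\neq(1,1)$, so both one-variable factors converge, and the two depth-two series converge absolutely enough to justify the rearrangement. Rewriting it as $\Li_{q,k}(x,y)=\Li_q(x)\Li_k(y)-\Li_{k,q}(y,x)-\Li_{k+q}(xy)$, I substitute this into the first term of the identity in Corollary \ref{cor-proof-SRCMZV}.

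Next, I move $-\Li_{k,q}(y,x)$ and $(-1)^{q+k}\Li_{k,q}(1/y,1/x)$ to one side. This produces the left-hand side
\[
\Li_{k,q}(y,x)-(-1)^{q+k}\Li_{k,q}(1/y,1/x).
\]
Everything else goes to the other side, giving in order:
\begin{itemize}
\item the product term $\Li_q(x)\Li_k(y)$;
\item the term $(-1)^q\Li_k(y)\Li_q(1/x)$;
\item the single value $-\Li_{k+q}(xy)$;
\item the two sums from Corollary \ref{cor-proof-SRCMZV}, each with its sign reversed by the transfer.
\end{itemize}
The first of these sums, with $\Li_{q+k-l}(1/x)$, is carried over unchanged in form.

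The remaining step is to reindex the second sum, $\sum_{l+m=q}$, by eliminating $m=q-l$. This turns $\binom{m+k-1}{k-1}$ into $\binom{q+k-l-1}{k-1}$ and $\Li_{k+m}(y)$ into $\Li_{q+k-l}(y)$, with $l$ running from $0$ to $q$. The $l=0$ term needs separate care: it must be read with the convention $\Li_0(x)+\Li_0(x^{-1})=-1$, and I will check that this convention is applied identically on both sides.

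I expect the only real obstacle to be sign bookkeeping in this reindexing. The factor $(-1)^m=(-1)^{q}(-1)^{l}$ produces $(-1)^q$ together with an extra $(-1)^l$ multiplying $\bigl((-1)^l\Li_l(\tfrac{1}{xy})+\Li_l(xy)\bigr)$. So I must verify carefully whether the bracket that results matches the displayed one, or whether the displayed form should be read with the roles of $\Li_l(xy)$ and $\Li_l(1/(xy))$ interchanged, i.e.\ as $\Li_l(\tfrac{1}{xy})+(-1)^l\Li_l(xy)$. I would settle this by:
\begin{itemize}
\item rechecking the sign of the $B_m$ contribution at the pole $s=0$ in the proof of Theorem \ref{mainthm-CMZVs}, specialized to $r=1$;
\item a quick numerical test with $x=y=-1$ and $q=k=2$.
\end{itemize}
Apart from this, the argument is purely a linear rearrangement of Corollary \ref{cor-proof-SRCMZV}.
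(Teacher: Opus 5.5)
Your route is the paper's own: its entire proof is the single sentence applying the stuffle relation to Corollary~\ref{cor-proof-SRCMZV}. The sign question you leave open is where the substance lies, however, and it resolves against the displayed formula. So your plan cannot end with the identity as stated.

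Reindexing with $m=q-l$ gives $(-1)^m=(-1)^q(-1)^l$. Absorbing $(-1)^l$ into the bracket turns the last sum of Corollary~\ref{cor-proof-SRCMZV} into
\[
(-1)^q\sum_{l=0}^q\binom{q+k-l-1}{k-1}\Big(\Li_l\Big(\frac1{xy}\Big)+(-1)^l\Li_l(xy)\Big)\Li_{q+k-l}(y),
\]
not the printed bracket $(-1)^l\Li_l(1/(xy))+\Li_l(xy)$.

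Corollary~\ref{cor-proof-SRCMZV} is not the culprit, as your first proposed check (the residue at $s=0$) confirms. The poles at $s=n>0$ must produce $\Li_{q,k}(x,y)$, which forces the kernel parameter to be $X=(xy)^{-1}$. Near $s=0$ one has $s\Phi(s;X)=1-\sum_{j\ge1}\big((-1)^j\Li_j(X)+\Li_j(X^{-1})\big)s^j$. With these, the residue at $s=0$ reproduces exactly the bracket and the factor $(-1)^m$ of Corollary~\ref{cor-proof-SRCMZV}. For instance, when $q=1$ it equals $-k\Li_{k+1}(y)+\big(\Li_1(\tfrac1{xy})-\Li_1(xy)\big)\Li_k(y)$.

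So the printed corollary has lost a factor $(-1)^l$ in its last sum; the paper's one-line derivation passes over this reindexing. For $q=1$, $k=2$ the two versions differ by $2\big(\Li_1(xy)-\Li_1(\tfrac1{xy})\big)\Li_2(y)$. At $x=i$, $y=1$ this equals $i\pi\zeta(2)\neq0$, so the displayed identity is false there. What your argument actually proves is the corrected identity, with the bracket above in the last sum. The $l=0$ term is unaffected, as you expected.

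Your proposed numerical check could not have detected this. With $x=y=-1$ you have $xy=1$, and whenever $xy\in\{\pm1\}$ every odd-$l$ bracket vanishes in both readings, so the two agree. (At $xy=1$ the $l=1$ bracket must be understood as it arises from $\Phi$, namely $\sum_n((xy)^n-(xy)^{-n})/n=0$.) A test that distinguishes the readings needs $xy\notin\{\pm1\}$.

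Two smaller slips:
\begin{enumerate}
\item The sums do not change sign in the transfer. Only the two depth-two terms move, so $-(-1)^q\sum_{l=0}^k$ and $-\sum_{l+m=q}$ keep their minus signs, just as the other three terms keep theirs. Taking ``sign reversed'' literally gives a wrong identity.
\item When $q=1$ or $k=1$ the double series are only conditionally convergent. Justify the stuffle relation by the exact finite identity for the truncations $\zeta_N$ and then let $N\to\infty$, rather than by absolute convergence.
\end{enumerate}
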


\section{Symmetry Results of Cyclotomic Multiple Hurwitz Zeta Values}\label{sec-proof-two}

In this section, we will prove Theorem \ref{mainthm-CMHZVs} by considering the residue computations of the contour integral \eqref{contour-integral-two}. For convenience, we let
\begin{align*}
G_q(s):=\frac{\Phi(s;x)\Li_{k_1,\ldots,k_r}(x_1,\ldots,x_r;1+a_1+s,\ldots,1+a_r+s)}{(s+a)^q}.
\end{align*}
{\bf Proof of Theorem \ref{mainthm-CMHZVs}.} From the definition, it is not difficult to see that the integrand $G_q(s)$ has, in addition to simple poles at all integer points on the complex plane, a pole of order $q$ at $-a$, and poles of order $k_j$ at $-n - a_j$ (where $n$ is any positive integer and $j = 1,2,\ldots,r$).

Applying \eqref{LEPhi-function} and \eqref{Talyor-expansion-mhpolyf} and performing elementary calculations, we can determine the residues of the function at the simple poles $n\in \N_0$ and $-n\in \N^-$, which are given respectively as follows. For $n\in\N_0$,
\begin{align}\label{equ-proof-thm-two-residu-1}
{\rm Res}[G_q(s),n]&=\lim_{s\rightarrow n} (s-n) \frac{\Phi(s;x)\Li_{k_1,\ldots,k_r}(x_1,\ldots,x_r;1+a_1+s,\ldots,1+a_r+s)}{(s+a)^q} \nonumber\\
&=\frac{(xx_1\cdots x_r)^{-n}}{(n+a)^q} \sum_{n<n_1<\cdots<n_r} \frac{x_1^{n_1}\cdots x_r^{n_r}}{(n_1+a_1)^{k_1}\cdots (n_r+a_r)^{k_r}}
\end{align}
and for $n\in\N$,
\begin{align}\label{equ-proof-thm-two-residu-2}
{\rm Res}[G_q(s),-n]&=\lim_{s\rightarrow -n} (s+n) \frac{\Phi(s;x)\Li_{k_1,\ldots,k_r}(x_1,\ldots,x_r;1+a_1+s,\ldots,1+a_r+s)}{(s+a)^q} \nonumber\\
&=\sum_{j=0}^r (-1)^{|\overrightarrow{\bfk}_{0,j}|}\frac{\Li_{\overrightarrow{\bfk}_{j+1,r}}\left(\overrightarrow{\bfx}_{j+1,r};\overrightarrow{\bfa}_{j+1,r}\right)}{x_{j+1}\cdots x_r}\nonumber\\&\qquad\qquad\qquad\times\frac{\zeta_{n-1}\left(\overleftarrow{\bfk}_{1,j};\overleftarrow{\bfx}_{1,j}^{-1};-\overleftarrow{\bfa}_{1,j}\right)}{(n-a)^q}(\widetilde{\bfx_{0,r}})^n.
\end{align}
In particular, summing \eqref{equ-proof-thm-two-residu-1} over $n$ from 0 to infinity yields
\begin{align*}
\sum_{n=0}^\infty {\rm Res}[G_q(s),n]&=\sum_{n=0}^\infty\frac{(xx_1\cdots x_r)^{-n}}{(n+a)^q} \sum_{n<n_1<\cdots<n_r} \frac{x_1^{n_1}\cdots x_r^{n_r}}{(n_1+a_1)^{k_1}\cdots (n_r+a_r)^{k_r}}\\
&=\sum_{n=1}^\infty \frac{(xx_1\cdots x_r)^{1-n}}{(n+a-1)^q} \sum_{n-1<n_1<\cdots<n_r} \frac{x_1^{n_1}\cdots x_r^{n_r}}{(n_1+a_1)^{k_1}\cdots (n_r+a_r)^{k_r}}\\
&=\sum_{n=1}^\infty \frac{(xx_1\cdots x_r)^{1-n}}{(n+a-1)^q} \sum_{n<n_1<\cdots<n_r} \frac{x_1^{n_1-1}\cdots x_r^{n_r-1}}{(n_1-1+a_1)^{k_1}\cdots (n_r-1+a_r)^{k_r}}\\
&=x \sum_{0<n<n_1<\cdots<n_r} \frac{(xx_1\cdots x_r)^{-n}x_1^{n_1}\cdots x_r^{n_r}}{(n+a-1)^q(n_1+a-1)^{k_1}\cdots (n_r+a_r-1)^{k_r}}\quad (q:=k_0)\\
&=x\Li_{\overrightarrow{\bfk}_{0,r}}\left(\widetilde{\bfx^{-1}_{0,r}},\overrightarrow{\bfx}_{1,r};\overrightarrow{\bfa}_{0,r}\right)\quad (a:=a_0).
\end{align*}

Using \eqref{equ-extend-xuzhou-two} and \eqref{Talyor-expansion-CMHZ-single-a}, the residue at the pole of order $q$ located at $-a$ can be computed as
\begin{align}\label{equ-proof-thm-two-residu-3}
&{\rm Res}[G_q(s),-a]=\frac1{(q-1)!}\lim_{s\rightarrow -a} \frac{d^{q-1}}{ds^{q-1}}\left\{\Phi(s;x)\Li_{k_1,\ldots,k_r}(x_1,\ldots,x_r;1+a_1+s,\ldots,1+a_r+s)\right\}\nonumber\\
&=\sum_{j+m=q-1,\atop j,m\geq 0}\sum_{|\overrightarrow{\bfm}_{1,r}|=m,\atop \forall\ m_l\geq 0} B_m\left(\overrightarrow{\bfk}_{1,r};\overrightarrow{\bfm}_{1,r}\right) \widehat{\Li}_{j+1}\left(x^{-1};a\right)\Li_{\overrightarrow{(\bfk+\bfm)}_{1,r}}\left(\overrightarrow{\bfx}_{1,r};\overrightarrow{(1-a+\bfa)}_{1,r}\right).
\end{align}
For a pole of order $k_j$ at $-n - a_j$ (where $n \in \mathbb{N}$), by applying \eqref{equ-extend-xuzhou-two} and \eqref{Laurent-expansion-Cmhpolyf} and performing direct computation, the residue is obtained as
\begin{align}\label{equ-proof-thm-two-residu-4}
&{\rm Res}[G_q(s),-n-a_j]=\lim_{s\rightarrow -n-a_j} \frac{d^{k_j-1}}{ds^{k_j-1}}\left\{\frac{(s+n+a_j)^{k_j}}{(k_j-1)!}\frac{\Phi(s;x)\Li_{\overrightarrow{\bfk}_{1,r}}(\overrightarrow{\bfx}_{1,r};\overrightarrow{(1+\bfa+s)}_{1,r})}{(s+a)^q}\right\}\nonumber\\
&=\sum_{0\leq i+m\leq k_j-1,\atop i,m\geq 0}\sum_{|\overrightarrow{\bfm}_{1,r}|-m_j=m,\atop \forall\ m_l\geq 0}D_{q,m,i,j}\left(\overrightarrow{\bfk}_{1,r};\overrightarrow{\bfm}_{1,r}\right)
\Li_{\overrightarrow{(\bfk+\bfm)}_{j+1,r}}\left(\overrightarrow{\bfx}_{j+1,r};\overrightarrow{(1-a_j+\bfa)}_{j+1,r}\right)\nonumber\\
&\qquad\qquad\qquad\qquad\times \widehat{\Li}_{i+1}\left(x^{-1};a_j\right)\frac{\zeta_{n-1}\left(\overleftarrow{(\bfk+\bfm)}_{1,j-1};\overleftarrow{\bfx}^{-1}_{1,j-1};\overleftarrow{(a_j-\bfa)}_{1,j-1}\right)}
{(n+a_j-a)^{q+k_j-i-m-1}}(\widetilde{\bfx}_{0,r})^n.
\end{align}
By the residue theorem stated in Lemma \ref{lem-redisue-thm}, the sum of all residues must equal zero, i.e.,
\begin{align}\label{equ-proof-thm-two-residu-5}
&\sum_{n=0}^\infty {\rm Res}[G_q(s),n]+\sum_{n=1}^\infty {\rm Res}[G_q(s),-n]\nonumber\\
&\quad+\sum_{n=1}^\infty \sum_{j=1}^r {\rm Res}[G_q(s),-n-a_j]+{\rm Res}[G_q(s),-a]=0.
\end{align}
Substituting the four residue results \eqref{equ-proof-thm-two-residu-1}-\eqref{equ-proof-thm-two-residu-4} into \eqref{equ-proof-thm-two-residu-5} and replacing $x$ with $\widetilde{\bfx_{0,r}^{-1}}$ in the resulting expression completes the proof of Theorem \ref{mainthm-CMHZVs}. \hfill$\square$

Setting $r=1,k_1=p,x_1=y$ and $a_1=b$ in Theorem \ref{mainthm-CMHZVs} yields the following corollary, namely the symmetry formula for cyclotomic double Hurwitz zeta values..
\begin{cor}\label{corone-sect-4} Let $x,y$ be roots of unity. For $a,b\in \mathbb{C}\setminus \Z$ and $(q,x),(p,y)\neq(1,1)$ with $b-a\notin \N^-$, we have
\begin{align*}
&(xy)^{-1} \Li_{q,p}(x,y;a,b)+(-1)^q y^{-1} \Li_p(y;b)\Li_q\Big(x^{-1};1-a\Big)\\
&+(-1)^{p+q} \Li_{p,q}\Big(y^{-1},x^{-1};1-b,1-a\Big)\\
&+\sum_{j+m=q-1,\atop j,m\geq 0} (-1)^m \binom{m+p-1}{p-1} \Li_{p+m}(y;1+b-a)\\&\qquad\qquad\times\left((-1)^j \Li_{j+1}\Big((xy)^{-1};1-a\Big)-(xy)^{-1}\Li_{j+1}(xy;a) \right)\\
&+(-1)^q \sum_{i=0}^{p-1} \binom{p+q-i-2}{q-1} \Li_{p+q-i-1}\Big(x^{-1};1+b-a\Big)\\&\qquad\qquad\times\left((-1)^i \Li_{i+1}\Big((xy)^{-1};1-b\Big)-(xy)^{-1}\Li_{i+1}(xy;b) \right)=0.
\end{align*}
\end{cor}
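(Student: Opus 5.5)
This corollary is the case $r=1$ of Theorem \ref{mainthm-CMHZVs}, so the plan is to substitute $r=1$, $k_1=p$, $x_1=y$, $a_1=b$ (with $k_0=q$, $x_0=x$, $a_0=a$) into \eqref{equ-mainthm-CMHZVs} and simplify each of the four groups of terms. The hypotheses match: $(q,x),(p,y)\neq(1,1)$ and $a,b\in\mathbb{C}\setminus\Z$. The condition $a_j-a_i\notin\N^-$ for $i<j$ reduces to $b-a\notin\N^-$.

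\textbf{Leading term and $j$-sum.} The first term becomes $(xy)^{-1}\Li_{q,p}(x,y;a,b)$. The sum over $j\in\{0,1\}$ gives:
\begin{itemize}
\item for $j=0$: $(-1)^q y^{-1}\Li_p(y;b)\Li_q(x^{-1};1-a)$;
\item for $j=1$: $(-1)^{p+q}\Li_{p,q}(y^{-1},x^{-1};1-b,1-a)$, using the empty-product conventions $\widetilde{\bfx^{-1}_{2,1}}=1$ and $\Li_\emptyset=1$.
\end{itemize}

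\textbf{$B_m$-sum.} Here $|\overrightarrow{\bfm}_{1,1}|=m$ forces $m_1=m$, so $B_m=(-1)^m\binom{m+p-1}{p-1}$. The multiple Hurwitz factor is $\Li_{p+m}(y;1-a+b)$. By its definition, $\widehat{\Li}_{j+1}(xy;a)=(-1)^j\Li_{j+1}((xy)^{-1};1-a)-(xy)^{-1}\Li_{j+1}(xy;a)$. This reproduces the fourth line of the corollary.

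\textbf{$D$-sum.} Only $j=1$ occurs. The inner constraint $|\overrightarrow{\bfm}_{1,1}|-m_1=m$ forces $m=0$, so the sum runs over $0\le i\le p-1$ with a single term each. We get $D_{q,0,i,1}=(-1)^{q}\binom{q+p-i-2}{q-1}$, since $|\overrightarrow{\bfk}_{0,0}|=q$, $|\overrightarrow{\bfm}_{2,1}|=0$, and the product over $l\neq 1$ is empty.

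\begin{itemize}
\item The factor $\Li_{\overrightarrow{(\bfk+\bfm)}_{2,1}}$ is $1$.
\item The remaining factor $\Li_{\overleftarrow{(\bfk+\bfm)}_{1,0},\,q+p-i-1}(\overleftarrow{\bfx}^{-1}_{0,0};\overleftarrow{(1+a_1-\bfa)}_{0,0})$ reduces to $\Li_{p+q-i-1}(x^{-1};1+b-a)$.
\item It is multiplied by $\widehat{\Li}_{i+1}(xy;b)$.
\end{itemize}

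\textbf{Expected obstacle.} The only delicate point is bookkeeping: correctly interpreting the index conventions for empty sequences ($\overrightarrow{\bfk}_{j+1,r}$ with $j=r$, and $\overleftarrow{(\cdot)}_{1,0}$). One must also check that the depth-one factor in the $D$-term carries weight $q+k_1-i-m-1$ with parameter shifted by $a_1-a_0=b-a$. Once these are read off, the identity is literally \eqref{equ-mainthm-CMHZVs}.

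As a consistency check, one could re-derive the $r=1$ residues directly from \eqref{equ-proof-thm-two-residu-3} and \eqref{equ-proof-thm-two-residu-4} with $G_q(s)=\Phi(s;x)\Li_p(y;1+b+s)/(s+a)^q$, replacing $x$ by $(xy)^{-1}$ at the end.
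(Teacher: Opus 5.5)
Your proposal is correct and matches the paper's argument: the paper obtains this corollary exactly as the specialization $r=1$, $k_1=p$, $x_1=y$, $a_1=b$ of Theorem~\ref{mainthm-CMHZVs}. Your term-by-term reading is accurate, including the empty-sequence conventions, $B_m=(-1)^m\binom{m+p-1}{p-1}$, and the forced $m=0$ with $D_{q,0,i,1}=(-1)^q\binom{p+q-i-2}{q-1}$; the direct residue check with $G_q(s)=\Phi(s;x)\Li_p(y;1+b+s)/(s+a)^q$ also goes through and reproduces all five terms after the substitution $x\mapsto (xy)^{-1}$.
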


From Corollary \ref{corone-sect-4} with the help of \eqref{equ-thm-main-two}, the following cases can be easily provided.
\begin{exa} Let $x$ be root of unity. For $a,b\in \mathbb{C}\setminus \Z$ and $b-a\notin \N^-$, we have
\begin{align*}
&\Li_{2,2}\Big(x^{-1},x;a,b\Big)+\Li_{2,2}\Big(x^{-1},x;1-b,1-a\Big)\\&\qquad+x^{-1} \Li_{2}(x;b)\Li_2(x;1-a)\\
&=\pi^2\Big(\csc^2(\pi a)+\csc^2(\pi b)\Big)\Li_2(x;1+b-a)\\&\qquad+2\pi \Big(\cot(\pi b)-\cot(\pi a)\Big)\Li_3(x;1+b-a),
\end{align*}
and
\begin{align*}
&\Li_{2,2}\Big(x^{-1},-x;1-b,1-a\Big)-\Li_{2,2}\Big(-x^{-1},x;a,b\Big)\\&\qquad+x^{-1} \Li_{2}(x;b)\Li_2(-x;1-a)\\
&=\pi^2 \csc(\pi a)\cot(\pi a) \Li_2(x;1+b-a)+\pi^2 \csc(\pi b)\cot(\pi b)\Li_2(-x;1+b-a)\\
&\quad+2\pi \csc(\pi b) \Li_3(-x;1+b-a)-2\pi \csc(\pi a) \Li_3(x;1+b-a).
\end{align*}
\end{exa}

It is worth emphasizing that in the collaborative paper \cite{LiXu2026} by the present author with Li, it was shown that the results obtained via contour integration can be regularized. The regularization process is not addressed in this paper; interested readers are encouraged to explore this aspect.

Finally, we conclude this paper by raising an open question.
\begin{qu}
Can the method of contour integration employed in this paper be extended to study other variants of multiple zeta values, such as the Schur multiple zeta values, Mordell-Tornheim zeta functions, Barnes multiple zeta functions, and so forth? The author believes that the key lies in obtaining specific forms of Laurent expansions and Taylor expansions for these variants of multiple zeta values.
\end{qu}

{\bf Declaration of competing interest.}
The author declares that he has no known competing financial interests or personal relationships that could have
appeared to influence the work reported in this paper.

{\bf Data availability.}
No data was used for the research described in the article.

{\bf Acknowledgments.} The author would like to express sincere gratitude to Dr. Steven Charlton and Jia Li for their valuable comments and discussions on this paper, and also thanks the anonymous reviewers for their valuable suggestions, which helped improve the quality of this paper. The author is supported by the General Program of Natural Science Foundation of Anhui Province (Grant No. 2508085MA014).

\end{document}